\documentclass[12pt]{amsart}

\usepackage[english]{babel}

\usepackage{amssymb,amsthm,amsfonts,amsmath}

\usepackage{verbatim}
\usepackage{ulem}
\usepackage{mathtools}

\usepackage{xspace}

\usepackage{url}

\usepackage{graphicx}

\usepackage[dvipsnames]{xcolor}
\DeclareGraphicsExtensions{.pdf,.png,.jpg}

\usepackage{pgfplots}
\usepgfplotslibrary{groupplots}
\pgfplotsset{compat=1.18}
\usepackage{tikz}
\usetikzlibrary{plotmarks}

\usepackage[shortlabels]{enumitem}

\definecolor{darkblue}{rgb}{0.0, 0.0, 0.55}
\usepackage[colorlinks,linkcolor=BrickRed,citecolor=OliveGreen,urlcolor=darkblue,hypertexnames=true,backref=page]{hyperref}
\usepackage[a4paper,margin=2.5cm]{geometry}
\DeclareMathOperator{\Rank}{rank}

\numberwithin{equation}{section}

\newcommand{\Mnc}[1]{ {\mathbb{C}^{#1 \times #1} }}
\newcommand{\Mnmc}[2]{ {\mathbb{C}^{#1 \times #2} }}
\newcommand{\Mnr}[1]{ {\mathbb{R}^{#1 \times #1} }}

\newcommand{\Mnf}[1]{ {\mathbb{F}^{#1 \times #1} }}

\newcommand{\RR}{\mathbb R}
\newcommand{\FF}{\mathbb F}

\newcommand{\CC}{\mathbb C}

\renewcommand{\vec}{\operatorname{vec}}

\newcommand{\df}[1]{{\it{#1}}{\index{#1}}}
\newcommand{\unitL}{{\tt{L}}} 
\newcommand{\unitR}{{\tt{R}}} 
\newcommand{\unitT}{{\tt{T}}}  
\newcommand{\spann}{\operatorname{span}} 
\newcommand{\vn}{n} 
\newcommand{\vm}{m}
\newcommand{\fw}{F} 
\newcommand{\matu}{unit }
\newcommand{\matus }{units}
\newcommand{\JP}{convolution product }
\newcommand{\JPs}{convolution products }
\newcommand{\CP}{\Gamma}
\newcommand{\jcp}{jcp\xspace}
\newcommand{\JCP}{jointly completely positive\xspace}
\newcommand{\Lcpg}{L_{\mathrm{\jcp,gen}}}
\newcommand{\Lcpc}{L_{\mathrm{\jcp,conv}}}
\newcommand{\Lgmv}{L_{\mathrm{GMV}}}

\newcommand{\trace}{\operatorname{trace}}

\makeindex
\makeatletter
\newcommand{\mycontentsbox}{%
{
\parskip=-1.0pt
\newpage\printindex\tableofcontents}}
\def\enddoc@text{\ifx\@empty\@translators \else\@settranslators\fi
\ifx\@empty\addresses \else\@setaddresses\fi
\newpage\mycontentsbox
}
\makeatother

\newtheorem{theorem}{Theorem}[section]
\newtheorem{corollary}[theorem]{Corollary}
\newtheorem{lemma}[theorem]{Lemma}
\newtheorem{proposition}[theorem]{Proposition}
\newtheorem{question}{Question}

\theoremstyle{definition}

\newtheorem{remark}[theorem]{Remark}
\newtheorem{example}[theorem]{Example}

\title{Completely positive matrix products}
\author{Eric Evert, Michael T. Jury, Scott McCullough}
\thanks{MJ partially supported by National Science Foundation Grant DMS-2154494.}  
\date{\today}

\begin{document}

\begin{abstract}
Building on recent works that investigate positivity preserving matrix products and the theory
of jointly completely positive bilinear maps, we examine the class of \JCP (\jcp) matrix products. A bilinear map $\Phi:\Mnc{n} \times \Mnc{n} \to \Mnc{m}$ is a \jcp matrix product if 
 \[
 0 \preceq  \big ( (\Phi(A_{i,j},B_{s,t})_{s,t=0}^{ \ell-1}) \big )_{i,j=0}^{ \ell-1}
 \]
  for all $\ell$ and all positive semidefinite matrices $(A_{i,j})_{i,j=0}^{ \ell-1}, (B_{s,t})_{s,t=0}^{ \ell-1} \in { \Mnc{n\ell}}$. Such products are naturally connected to completely positive maps. In particular, 
    a matrix product is \jcp if and only if the associated linear map out of the tensor product is completely positive, which in turn is equivalent to saying that a naturally associated Choi matrix is positive semidefinite. Similarly, a matrix product is \jcp if and only if it has a Choi-Kraus representation, $\Phi(A,B) = \sum V_j^* (A \otimes B) V_j$. We use the Choi-Kraus representation of \jcp matrix products to study various basic properties, including positivity lower bounds, commutativity, units, causality, and separability. As examples, we apply our results to the Schur (Hadamard) product and the convolution product. 
\end{abstract}

\maketitle

\maketitle

\noindent \textit{Keywords}: Matrix products,  jointly completely positive bilinear maps,  positivity preserving, completely positive, Choi-Kraus representation, Schur product, convolution product, Oppenheim-Schur bounds.

\noindent \textit{MSC 2020}: Primary: 15B48, 47A20. Secondary: 46L07, 15A42.

\section{Introduction}

Positivity is a central theme in functional analysis, matrix analysis, and operator theory. For example,
it plays a key role in matrix decompositions \cite{BP,LS}, $C^*$-algebras \cite{Arv,Dav}, 
quantum information theory, and convex duality \cite{Alf}. Recently, positive-preserving matrix products have been explored in works such as \cite{lower} and \cite{dominiqueOppenheim}. Here, for positive integers $\vn$ and $\vm,$  a \df{matrix product} is a bilinear map $\Phi:\Mnc{n} \times \Mnc{n} \to \Mnc{m},$
where \df{$\Mnc{n}$} denotes the $n\times n$ matrices $A=(A_{jk})_{j,k=0}^{n-1}$ over $\CC.$ { Following \cite{dominiqueOppenheim},} 
a matrix product $\Phi$ is \df{positivity-preserving} if $\Phi(A,B) \succeq 0$ whenever $A,B \succeq 0,$ where $T\succeq 0$ \index{$\succeq$}
 indicates that the square matrix $T$ is \df{positive semidefinite} (\df{psd}).

Classical examples of positivity-preserving  products include the \df{Schur (Hadamard) product} $S:\Mnc{n} \times \Mnc{n} \to \Mnc{n}$ defined by $S(A,B)_{ij} = A_{i,j} B_{i,j}$, and the \df{\JP} $\CP:\Mnc{n} \times \Mnc{n} \to \Mnc{n}$ defined by \index{$\CP$} \index{$S$}
\[
\CP (A, B)_{ij} = \sum_{m=0}^i \sum_{n=0}^j A_{m,n} B_{i-m,j-n}.
\]
Both of these products have received extensive attention since their introduction and both play a role in interpolation problems of Pick type; e.g., see \cite{Horn,Wagner,Jury,DMM,AM}. 

Recently, the article \cite{dominiqueOppenheim} illustrated that the positivity-preserving property of these products can be quantified via Oppenheim-Schur type lower bounds \cite{FallatJohnson, Oppenheim, Schur}. Indeed, their work derives impressive lower bounds for the determinants of the Schur and \JP of positive-definite matrices. The article \cite{dominiqueOppenheim} additionally extends these lower bounds to a related class of \df{causal} matrix products, which intuitively capture a forward flow of information.  

A unifying feature of the Schur and \JPs is that both are representable in a \df{Choi-Kraus form} $V^* (A \otimes B) V$ for appropriately chosen matrices $V \in \mathbb{C}^{n^2 \times n}$. Here \df{$\otimes$} is the tensor product of operators on Hilbert space and may be 
identified with the Kronecker product if desired. This Choi-Kraus form provides a unifying qualitative explanation for why these products all preserve positivity. It also says that both products arise from completely positive maps.

Recall that, for a positive integer $\ell,$  a linear map $\Psi:\Mnc{n} \to \Mnc{m}$ is 
\df{$\ell$-positive} if for all positive semidefinite block matrices $0 \preceq A = (A_{ij})_{ij} \in \Mnc{n\ell}$  the block \df{ampliation}
\[
\Psi_\ell((A_{ij})_{ij}):= (\Psi(A_{ij}))_{ij} \in \Mnc{m\ell} \cong M_\ell(M_m)
\]
is positive semidefinite. The map $\Psi$ is \df{\JCP} (\df{\jcp}) if $\Psi$ is $\ell$-positive for all $\ell \in \mathbb{N}$. Such maps play a central role in the theory of operator spaces and systems and $C^*$-algebra, see e.g. \cite{PaulsenBook}.

As shown in the work of Choi \cite{Choi},  a linear map $\Psi:\Mnc{n} \to \Mnc{m}$ is completely positive if and only if it is $n$-positive. In fact, one need only check that its \df{Choi matrix}  \index{$C_\Phi$}
\[
  C_\Psi:=\Psi_n((E_{ij})_{i,j=0}^{n-1})
\]
 is positive semidefinite. Here the \df{$E_{ij}$} denote the standard \df{matrix units} in $\Mnc{n}$. Based on a natural decomposition of this Choi matrix, Choi further showed $\Phi$ is completely positive if and only if there exist a finite collection of matrices 
 $\{V_j\}_{j=1}^k \in \mathbb{C}^{n \times m}$   such that 
\[
 \Psi(A) = \sum_{j=1}^k V_j^* A V_j
\]
 for all matrices $A \in \Mnc{n}$. That is, a linear map $\Psi$ is completely positive if and only if it has a \df{Choi-Kraus representation}.

 The article \cite{KPTT} studies jointly completely positive 
 bilinear maps. Specializing to the case of 
 matrix algebras, we say a bilinear map 
 $\Phi:\Mnc{n} \times \Mnc{n} \to \Mnc{m}$ \sout{ such that}
 is a \df{jointly completely positive (jcp) matrix product}
 if \index{jcp}
 \[
 0 \preceq  \big ( (\Phi(A_{i,j},B_{s,t})_{s,t=0}^{ \ell-1}) \big )_{i,j=0}^{ \ell-1}
 \]
 for all $\ell$ and all positive semidefinite matrices $(A_{i,j})_{i,j=0}^{ \ell-1}, (B_{s,t})_{s,t=0}^{ \ell-1} \in { \Mnc{n\ell}}$. 
  It is evident that jcp matrix products
 are positivity preserving and this class implicitly appears in \cite{lower}.  See
 Remark~\ref{r:implicit-in-GMV}. 
 
  Theorem~\ref{thm:CPPForms}
  expands on Theorem 5.8 of \cite[Theorem 5.8]{KPTT} by including the Choi matrix that is available in the finite dimensional setting.  That is, a matrix product is \jcp if and only if a corresponding Choi matrix is positive semidefinite  if and only if the matrix product has a Choi-Kraus representation. We leverage this Choi-Kraus form to study basic algebraic properties, such as commutativity and units, and to provide alternate proofs of several lower bounds found in \cite{dominiqueOppenheim}. 

\subsection{Guide to the article}
\label{sec:guide}
 In Section \ref{s:cppp} Choi's theorem is naturally extended  to the setting of \jcp products. 
 Theorem~\ref{thm:CPPForms} records basic facts
 about \jcp matrix products that flow from completely positivity. See also \cite[Theorem 5.8]{KPTT}. A matrix product $\Phi:\Mnc{n} \times \Mnc{n} \to \Mnc{m}$ is \jcp if and only if a natural Choi matrix is positive semidefinite, which  in turn corresponds to $\Phi$ having a Choi-Kraus representation. In Section \ref{sec:LowerBounds}, we derive lower bounds for the positivity of a \jcp product $\Phi(A,B)$ based on the Choi-Kraus representation of $\Phi$. In particular, Proposition \ref{p:generic-lower} provides an easily computable, although potentially far from optimal, lower bound on the determinant of an arbitrary \jcp product.
 { Meanwhile Theorems \ref{p:DMV-Prod+lower} and \ref{t:lbJ} use the \jcp Choi-Kraus representation framework to offer modest refinements of \cite[Theorem A]{lower} and \cite[Theorem 2.1]{lower}, respectively.}  
Section \ref{sec:Causal} studies causal \jcp products. An advantage of the \jcp framework here is that it provides a natural description of \df{left causal} and \df{right causal} products. The main result of this section is Theorem \ref{t:causal}.  It  shows that a \jcp product $\Phi$ is left causal if and only if it has a Choi-Kraus representation $\Phi(A,B) = \sum V_j^* (A \otimes B) V_j$ where each $V_j \in \mathbb{C}^{n^2 \times n}$ is \df{left upper triangular}.  In Section \ref{sec:BasicAlgebra} we study basic algebraic properties of \jcp products such as commutativity, associativity, and the existence of units. Section \ref{sec:separable} introduces \df{separable \jcp products}, i.e., \jcp products whose Choi matrix is separable in a natural sense. It begins by recording, as Proposition \ref{prop:SeparableKrausForm},
the anticipated fact that a \jcp product is separable if and only if it has a Choi-Kraus representation where each $V_j$ is rank one.
Using this fact we proceed to
show that separability imposes strong restrictions on  units of separable \jcp products. The article ends with Section \ref{sec:NumExp}. It presents numerical experiments that illustrate the quality of the lower bounds obtained in Section \ref{sec:LowerBounds}.

 The article uses some standard conventions that we spell out here.  Some results are basis
 dependent. In these cases we fix a basis $\{e_0,\dots, e_{n-1}\}$ of $\CC^n$ and order the orthonormal
 basis $\{e_i\otimes e_s: 0\le i,s<n\}$ for $\CC^n\otimes \CC^n\cong \CC^{n^2}$  using the lexicographic order with $0>1>\dots >n-1.$ Thus, $e_0\otimes e_j$
 comes before any $e_1\otimes e_k.$ In the case of $n=2,$ the basis is, in order, $e_0\otimes e_0, e_0\otimes e_1, 
 e_1\otimes e_0, e_1\otimes e_1.$  This choice is consistent with the usual convention for the Kronecker product;
 that is, if $A,B\in \Mnc{n},$ then $A\otimes B$ with respect to this basis is the standard Kronecker product.

 The \df{vectorization of a matrix} $A\in \Mnc{n}$ in terms of its columns is
\[
 \vec(A) = \sum_{i,s} e_i \otimes Ae_s.
\]
Thus, for instance, for $A=(a_{u,v})_{u,v=0}^1 \in \Mnc{2},$
\[
 \vec(A) = \begin{pmatrix} a_{00} & a_{10} & a_{01} & a_{11}\end{pmatrix}^\top,
\]
 where \df{${}^\top$} denotes transpose.

\section{Complete Positivity Products}
\label{s:cppp}

A  bilinear map $\Phi: \Mnc{n} \times \Mnc{n} \mapsto \Mnc{m}$ is \df{positivity preserving} if $\Phi(A,B) \succeq 0$
for all positive semidefinite $A,B \in \Mnc{n}$.
 It is \df{\JCP} (\df{\jcp}) if for each $\ell \in \mathbb{N}$ and for all positive semidefinite matrices $(A_{ij})_{i,j=0}^{ \ell-1}, (B_{s,t})_{s,t=0}^{ \ell-1} \in { \Mnc{n\ell}} \cong M_\ell(M_n)$, 
\begin{equation}
    \label{d:cpp}
    \begin{split}
0  & \preceq  \big ( (\Phi(A_{i,j},B_{s,t})_{s,t=0}^{ \ell-1}) \big )_{i,j=0}^{ \ell-1}
\\  &  =\sum_{i,j,s,t=0}^{{ \ell-1}}  E_{i,j} \otimes E_{s,t} \otimes  \Phi(A_{i,j},B_{s,t}) \in { \Mnc{\ell^2 n} \cong M_{\ell} (M_\ell( M_n))},
\end{split}
\end{equation}
 where \df{$E_{i,j}$} for $0\le i,j\le { \ell-1}$ are the \df{matrix units} for ${ \Mnc{\ell}}$ and \df{$M_n(\mathcal{A})$} denotes the $n\times n$
 matrices with entries from $\mathcal{A}.$ Clearly, a \jcp product is positivity preserving.

The identification of a bilinear map $\Phi:\Mnc{n}\times \Mnc{n}\to \Mnc{m}$ 
with its lift to a linear map $\Psi$ from the tensor product $\Mnc{n}\otimes \Mnc{n}$ into $\Mnc{m}$
naturally leads to the following characterization of \jcp products, which  essentially appears as 
Theorem 5.8 of \cite{KPTT}.

\begin{theorem}
\label{thm:CPPForms}
For a bilinear map;  $\Phi: \Mnc{n} \times \Mnc{n} \mapsto \Mnc{m},$ the following are equivalent.  \index{$C_\Phi$}
\begin{enumerate}[(i)] \itemsep=6pt 
    \item \label{it:CPPDef} The map $\Phi$  is \jcp. 
    \item \label{it:CPPChoi} The matrix 
\[
  \begin{split}
      C_\Phi   & := 
   ((\Phi(E_{i,j},E_{s,t})_{i,j=0}^{n-1}))_{s,t=0}^{n-1}
  \\ &  = \sum_{i,j,s,t=0}^{n-1}  E_{i,j} \otimes E_{s,t} \otimes  \Phi(E_{i,j},E_{s,t}) \in \Mnc{n} \otimes \Mnc{n} \otimes \Mnc{m},
 \end{split}
\]
  is psd.  
    \item \label{it:CPPKraus} There exist an $N \leq mn^2$ and matrices $V_1, \dots V_N \in \Mnmc{n^2}{m}$ such that 
    \[
     \Phi(A,B) = \sum_{k=1}^N V_k^* (A \otimes B) V_k.
    \]
    \item \label{it:CPPMJ} The canonical linear map $\Psi: \Mnc{n}\otimes \Mnc{n} \to \Mnc{m}$ on the tensor product induced by $\Phi,$ so that $\Psi(A\otimes B) =\Phi(A,B),$ is completely positive.
    \item \label{it:npos}  The positivity of equation~\eqref{d:cpp} holds for $\ell=n.$ 
\end{enumerate}
\end{theorem}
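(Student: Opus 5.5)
The plan is to reduce everything to Choi's theorem for the linear map $\Psi$ of (iv), using the identification $\Mnc{n}\otimes\Mnc{n}\cong\Mnc{n^2}$ via the Kronecker product. The point is that $E_{i,j}\otimes E_{s,t}$, for $0\le i,j,s,t\le n-1$, are exactly the matrix units of $\Mnc{n^2}$, indexed by the pairs $(i,s),(j,t)$ in the lexicographic order fixed in Section~\ref{sec:guide}.

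First I will show (ii)$\Leftrightarrow$(iv)$\Leftrightarrow$(iii). The Choi matrix of $\Psi$ is $\sum E_{(i,s),(j,t)}\otimes \Psi(E_{i,j}\otimes E_{s,t})$. Under the canonical shuffle $E_{(i,s),(j,t)}=E_{i,j}\otimes E_{s,t}$, this is exactly $C_\Phi$. The two block orderings written in (ii) differ by a permutation unitary, which does not affect positivity. By Choi's theorem, $\Psi$ is completely positive iff $C_\Phi\succeq 0$, and iff $\Psi(X)=\sum_k V_k^* X V_k$ with at most $n^2m$ Kraus operators $V_k\in\Mnmc{n^2}{m}$. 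One Kraus operator comes from each rank-one term of a spectral decomposition of the $n^2m\times n^2m$ matrix $C_\Phi$. Substituting $X=A\otimes B$ gives (iii). Conversely, (iii) defines a completely positive $\Psi$ on all of $\Mnc{n^2}$, since the $A\otimes B$ span it, so bilinearity determines $\Psi$.

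Next, (iii)$\Rightarrow$(i). Take psd $A=(A_{i,j})\in M_\ell(M_n)$ and $B=(B_{s,t})\in M_\ell(M_n)$. Then $A\otimes B\in M_\ell(M_n)\otimes M_\ell(M_n)$ is psd. After the canonical shuffle $M_\ell(M_n)\otimes M_\ell(M_n)\cong M_\ell\otimes M_\ell\otimes(M_n\otimes M_n)$, which is unitary conjugation, it becomes $\sum E_{i,j}\otimes E_{s,t}\otimes A_{i,j}\otimes B_{s,t}$, again psd. Conjugating by $I_{\ell^2}\otimes V_k$ and summing yields $\sum E_{i,j}\otimes E_{s,t}\otimes\Phi(A_{i,j},B_{s,t})\succeq0$, which is \eqref{d:cpp}.

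The implication (i)$\Rightarrow$(v) is trivial. For (v)$\Rightarrow$(ii), take $\ell=n$ and $A=B=(E_{i,j})_{i,j=0}^{n-1}$, which equals $vv^*$ with $v=\sum_i e_i\otimes e_i$ and so is psd. Then \eqref{d:cpp} gives exactly $\sum E_{i,j}\otimes E_{s,t}\otimes\Phi(E_{i,j},E_{s,t})=C_\Phi\succeq0$. This closes the cycle (i)$\Rightarrow$(v)$\Rightarrow$(ii)$\Rightarrow$(iii)$\Rightarrow$(i), with (iv) equivalent to (ii) and (iii).

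The main obstacle is bookkeeping rather than substance. I must check carefully that the shuffle identifications (the order of tensor factors and the index ordering $(i,s)$ versus $(s,i)$) are implemented by permutation unitaries. In particular, I need that $C_\Phi$ as written in (ii) coincides, up to such a permutation, with the standard Choi matrix of $\Psi$ on $\Mnc{n^2}$. I will write the permutation explicitly on basis vectors $e_i\otimes e_s\otimes e_p$ to make this rigorous.
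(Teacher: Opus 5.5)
Your proposal is correct and follows essentially the same route as the paper: you identify $C_\Phi$ with the Choi matrix of $\Psi$, obtain (i)$\Rightarrow$(v)$\Rightarrow$(ii) by plugging in $A=B=(E_{i,j})_{i,j}$, and pass between (ii), (iii) and (iv) via Choi's theorem. The differences are only in presentation. The paper builds the Kraus operators $V_k$ by hand from a decomposition $C_\Phi=\sum_k w_kw_k^*$ instead of quoting Choi's theorem. You instead write out the shuffle-and-conjugate argument for (iii)$\Rightarrow$(i), which the paper leaves as straightforward, and you correctly note that the two block orderings of $C_\Phi$ displayed in (ii) differ by a permutation conjugation.
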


\begin{proof}
    First note that if \ref{it:CPPDef} holds, then  $\Phi(A,B) = C_\Phi$
     for the choices $A=(E_{i,j})_{i,j=1}^n \succeq 0$ and 
     $B=(E_{s,t})_{s,t=1}^n.$  Hence $C_\Phi \succeq 0.$ Now suppose \ref{it:CPPChoi} holds so that  $C_\Phi \succeq 0.$ Let $N \leq mn^2$ denote the rank of $C_\Phi$. There exist vectors $\{w_k\}_{k=1}^N \in \mathbb{C}^{mn^2}$ such that 
    \[
    C_\Phi = \sum_{k=1}^{N} w_k w_k^*. 
    \]
    Identify $\mathbb{C}^{mn^2} \cong   (\mathbb{C}^n \otimes \mathbb{C}^n)\otimes \mathbb{C}^m$ and write $w_k\cong (w_k(i,s))_{i,s=0}^{n-1}$ with respect to this identification, so $w_k(i,s) \in \mathbb{C}^m$ for each $k,i,s$. Define
    \[
    V_k = (w_k(i,s)^*)_{i,s=0}^{n-1} 
     =\sum_{i,s=0}^{n-1} (e_i\otimes e_s) \, w_k(i,s)^* \in \Mnmc{n^2}{m},
    \]
    where $\{e_0,\dots,e_{n-1}\}$ is the standard
     orthonormal basis for $\CC^n.$
    Thus $V_k$ is  an $n^2 \times  m$ matrix with rows $w_k(i,s)^*$. 
    From 
\begin{align*}
    V_k^* (E_{i,j} \otimes E_{s,t}) V_k &= V_k^* (e_i e_j^* \otimes e_s e_t^*) V_k \\
    & = V_k^* (e_i \otimes e_s) (e_j^* \otimes e_t^*) V_k \\
    & = w_k(i,s) w_k (j,t)^*,
\end{align*}
    it follows that $\sum_k V_k^* (E_{i,j} \otimes E_{s,t}) V_k = \sum w_k(i,s) w_k (j,t)^* = ((C_\Phi)_{i,j})_{s,t} = \Phi(E_{i,j},E_{s,t})$. The  bilinearity of $\Phi$ now gives
    \[
    \sum_{k=1}^N V_k^* (A \otimes B) V_k = \Phi(A,B)
    \]
    for any matrices $A,B$. {Thus item~\ref{it:CPPChoi} implies item~\ref{it:CPPKraus}. It is straightforward to check that item~\ref{it:CPPKraus} implies item~\ref{it:CPPDef}.}

Since the map $\Phi$ is bilinear, it factors through the tensor product $\Mnc{n}\otimes \Mnc{n};$ that is, there
exists a linear map $\Psi:\Mnc{n}\otimes \Mnc{n}\to \Mnc{m}$ such that $\Phi(A,B)=\Psi(A\otimes B).$  The Choi matrix $C_\Psi$ for $\Psi$  is
\[
\begin{split}
 C_\Psi  & = \begin{pmatrix} \begin{pmatrix} \Psi(E_{i,j}\otimes E_{s,t}) \end{pmatrix}_{i,j} \end{pmatrix}_{s,t}
  =\sum_{i,j,s,t}   \left ( E_{i,j}\otimes E_{s,t} \right ) \otimes \Psi(E_{i,j}\otimes E_{s,t})
 \\ & = \begin{pmatrix} \begin{pmatrix} \Phi(E_{i,j},E_{s,t}) \end{pmatrix}_{i,j} \end{pmatrix}_{s,t} = C_\Phi,
\end{split}
\]
 from which it follows that item~\ref{it:CPPMJ} is equivalent to the other items since the complete positivity of $\Psi$ is equivalent to the positivity of its Choi matrix.

It is immediate that item~\ref{it:CPPDef} implies 
item~\ref{it:npos} and in turn item~\ref{it:npos} implies
item~\ref{it:CPPChoi}. The proof is now complete.
\end{proof}

\begin{remark}
\label{r:CPPP-form-terminology}
  The representation of $\Phi$ in item~\ref{it:CPPKraus} is
  known as a \df{Choi-Kraus form}. The matrices $V_k$ are
  the \df{coefficients} of the representation. The minimum number
  of $V_k$ needed is the \df{Choi-Kraus rank} of $\Phi.$

  The matrix  $C_\Phi$ in item~\ref{it:CPPChoi} is the \df{Choi matrix} of $\Psi$ in item~\ref{it:CPPMJ},
  where $C_\Psi=C_\Phi.$ We will also refer to this matrix and the Choi matrix of $\Phi.$
  As a consequence of the proof of the theorem, the rank of $C_\Psi$ ($C_\Phi$) is equal to the Choi-Kraus rank of $\Psi$
  ($\Phi$).
  \qed
\end{remark}

\begin{remark}\rm
 \label{r:Kraus-to-Choi}
 The definition of a \JCP product
 and Theorem~\ref{thm:CPPForms} extend to the setting
 of bilinear maps $\Phi:\Mnc{n_1}\times \Mnc{n_2}\to \Mnc{m}.$
 We eschew this extra generality here.
 \qed
\end{remark}

\begin{remark}\rm
 \label{r:implicit-in-GMV}
   A bit of unravelling reveals that the matrix $Y$ appearing
   in \cite[Definition~2.1(b)]{lower} is a Choi matrix
   and hence the class $\mathbf{Prod_+}$ appearing there is precisely the class of 
   \jcp products.
   \qed
\end{remark}

\subsection{Examples of Choi-Kraus decompositions} As examples of Theorem \ref{thm:CPPForms}, we present the Choi-Kraus decompositions of the Schur and \JPs.

\subsubsection{The Schur product map}
\label{sss:Schur-prod}
 It is well known that the Schur product $S:\Mnc{n}\times \Mnc{n}\to \Mnc{n}$ defined by 
\[
 S(A,B)_{i,j} =A_{i,j}\ B_{i,j},  
\]
  is \jcp; that is, the map it determines on $\Mnc{n}\otimes \Mnc{n}$ is \jcp. The map $V:\CC^n\to \CC^n\times \CC^n$ determined by $Ve_j =e_j\otimes e_j$ gives,
\[
  S(A,B) = V^* (A\otimes B) V.
\]
 Thus $S$ satisfies item~\ref{it:CPPKraus} of Theorem~\ref{thm:CPPForms}.
 In particular, the Choi-Kraus rank of $S$ is one.

 To compute the Choi matrix for $S,$ first observe that
\[
 S(E_{i,j},E_{s,t}) = \delta_{(i,j),(s,t)} \ E_{i,j}.
\]
Hence
\[
\begin{split}
 C_S & = \sum_{i,j,s,t=0}^{n-1} S(E_{i,j},E_{s,t}) \ \otimes \left ( E_{i,j}\otimes E_{s,t} \right )
 \\ & = \sum_{i,j} E_{i,j} \otimes  \left ( E_{i,j}\otimes E_{i,j} \right ).
\end{split}
\]

\subsubsection{The \JP map}
 \label{sss:convo-product}
Recall the \JP $\CP:\Mnc{n}\times \Mnc{n}\to \Mnc{n}$ defined by
\[
\CP(A, B)_{i,j} = \sum_{m=0}^i \sum_{n=0}^j A_{m,n} B_{i-m,j-n}.
\]
Let $\{e_j\}_{j=0}^{n-1}$ denote the standard basis in $\mathbb{C}^n$ and  $V:\CC^n\to \CC^n\otimes \CC^n$ denote the map defined by
 $Ve_j =  \sum_{s+u=j}e_s\otimes e_u$. A computation reveals the Choi-Kraus form 
\begin{equation}
    \label{e:Choi-for-Gat}
   \CP(A,B) = V^* (A\otimes B) V,
\end{equation}
 for $\Gamma.$
 In particular, the  Choi-Kraus rank of $\CP$ is one.  Moreover, the Choi matrix for $\CP$ is
 \[
  C_{\CP} = \sum \{ (e_{i+s}\otimes e_{j+t}^*)  \otimes   (e_i\otimes e_s) (e_i\otimes e_s)^* \mid  i+s, j+t\le n\}.
\]

As a final remark, the Choi-Kraus coefficient in the Choi-Kraus representation of $\CP$ in equation~\eqref{e:Choi-for-Gat} has the alternate description
\[
  V^* = \begin{pmatrix} I &  \fw &  \fw^2 & \dots &   \fw^{N-1}\end{pmatrix} = \sum_{j=0}^{n-1}  e_j^* \otimes \fw^{j} = \sum_{j=0}^{n-1} \fw^{j} \otimes e_j^*,
\]
  where $\fw \in \Mnc{n}$ is the \df{forward shift}, $\fw e_j=e_{j+1}$ for $0\le j< n-1$ and $\fw e_{n-1}=0.$ Equivalently, 
  \begin{equation}
    \label{e:VforGat}
  V= \sum_{j=0}^{n-1}  e_j \otimes \fw^{*j} =  \sum_{j=0}^{n-1} \fw^{*j} \otimes e_j.
  \end{equation}
  The equality of the latter two expressions 
  encodes the commutativity of the convolution product. See the upcoming Theorem \ref{thm:CommutativeChoiRep}.

\section{Lower bounds for \jcp products}
\label{sec:LowerBounds}

In this section we present lower bounds for the positivity of a \jcp image of positive definite matrices. Numerical experiments that illustrate the quality of the bounds are provided in Section \ref{sec:NumExp}. 

We first provide a naive lower bound that is quickly computable from a Choi-Kraus representation. Let $\lambda_j(A)$ denote the $j$-th largest eigenvalue value of a matrix $A$ and  $\sigma_j(A)$ denote the $j$-th largest singular value of $A$, including $0$ as a possibility. The proofs of the following generic lower estimates are routine.

\begin{proposition}
\label{p:generic-lower}
 Let $\Phi:\Mnc{n} \times \Mnc{n} \to\Mnc{m}$ be a \jcp product with Choi-Kraus representation 
 \[
\Phi(A,B) = \sum_{j=1}^\ell V_j^* (A \otimes B) V_j,
 \]
 and set $V^* := [V_1^* \dots V_\ell^*] \in \mathbb{C}^{m \times n^2\ell}$. If $0 \preceq A,B \in \Mnc{n}$ and $m \leq \ell n^2$,  then
 \[
 \lambda_m(\Phi(A,B)) \geq \lambda_n(A) \lambda_n(B) \sigma_m(V)^2
 \]
 and 
 \[
 \det(\Phi(A,B)) \geq \prod_{j=0}^{m-1} \Big(\lambda_{\ell n^2 - j} (I_\ell \otimes A \otimes B) \sigma_{m-j} (V)^2\Big).
 \]
  Moreover both estimates are sharp. 
\end{proposition}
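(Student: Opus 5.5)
The plan is to rewrite $\Phi(A,B)$ as a single compression and then apply standard eigenvalue inequalities for products of matrices. Set $V^* = [V_1^*\ \cdots\ V_\ell^*]$, so that $V\in\mathbb{C}^{\ell n^2\times m}$ is the column stack of the $V_j$, and put $P := I_\ell\otimes A\otimes B \succeq 0$. Then
\[
\Phi(A,B)=\sum_{j=1}^\ell V_j^*(A\otimes B)V_j = V^* P V .
\]
The eigenvalues of $P$ are those of $A\otimes B$, each repeated $\ell$ times. These in turn are the products $\lambda_i(A)\lambda_k(B)$, so the smallest eigenvalue of $P$ is $\lambda_{\ell n^2}(P)=\lambda_n(A)\lambda_n(B)$.

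For the first estimate, use $P\succeq \lambda_n(A)\lambda_n(B)\, I_{\ell n^2}$. Since congruence preserves order, this gives $V^*PV \succeq \lambda_n(A)\lambda_n(B)\, V^*V$. The eigenvalues of the $m\times m$ matrix $V^*V$ are $\sigma_1(V)^2,\dots,\sigma_m(V)^2$, using $m\le \ell n^2$. Weyl monotonicity then yields $\lambda_m(\Phi(A,B))\ge \lambda_n(A)\lambda_n(B)\sigma_m(V)^2$.

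For the determinant, write $V^*PV = (P^{1/2}V)^*(P^{1/2}V)$, so that $\det \Phi(A,B)=\prod_{k=1}^m \sigma_k(P^{1/2}V)^2$. The key input is the singular value inequality for products in multiplicative form. I would invoke Horn's inequality $\prod_{k\in S}\sigma_k(XY)\ge\prod \ldots$, or more simply the lower bound $\sigma_{k}(XY)\ge \sigma_{k}(Y)\,\sigma_{\min}(X)$. That simple version is too weak, so the sharper pairing is needed: with $X=P^{1/2}$ and $Y=V$, the product $\prod_{k=1}^m\sigma_k(XY)$ is at least $\prod_{j=0}^{m-1}\sigma_{N-j}(X)\sigma_{m-j}(Y)$, where $N=\ell n^2$. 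This is a Gel'fand--Naimark type inequality pairing the smallest singular values of $X$ with the singular values of $Y$.

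A cleaner route to the determinant bound goes through the polar decomposition $V=W|V|$ with $W$ an isometry. This gives $\det(V^*PV)=\det(|V|)^2\det(W^*PW)$. Cauchy interlacing gives $\lambda_k(W^*PW)\ge\lambda_{N-m+k}(P)$, hence $\det(W^*PW)\ge\prod_{j=0}^{m-1}\lambda_{N-j}(P)$. Combined with $\det|V|^2=\prod_k\sigma_k(V)^2$, this yields the stated product after reindexing both products over $j=0,\dots,m-1$. If $V$ has rank less than $m$, both sides vanish and the inequality is trivial. I expect this step, choosing the right interlacing or Horn-type inequality so the indices pair exactly as stated, to be the only real obstacle.

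For sharpness, take $A=B=I$ with $V$ an isometry, or let $V$ have columns along eigenvectors of $P$ for its $m$ smallest eigenvalues. For example, take $\ell=1$, $m=1$, and $V=v$ a unit eigenvector of $A\otimes B$ for its smallest eigenvalue. Then both bounds become equalities.
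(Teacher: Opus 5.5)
Your argument is correct and is the routine proof the paper leaves to the reader. Write $\Phi(A,B)=V^*(I_\ell\otimes A\otimes B)V$. The eigenvalue bound then follows from $I_\ell\otimes A\otimes B\succeq\lambda_n(A)\lambda_n(B)I$ and Weyl monotonicity. The determinant bound follows from the polar decomposition $V=W|V|$ together with Cauchy interlacing for the compression $W^*(I_\ell\otimes A\otimes B)W$; as you observe, the index pairing in the stated product is immaterial. Taking $A=B=I$ gives equality in both estimates for any $V$, so the exploratory Horn/Gel'fand--Naimark paragraph can simply be dropped.
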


\begin{remark}\rm
  While the estimates in Proposition~\ref{p:generic-lower} are sharp,
  they are generically far from optimal and {typically} do not compete
  with the remarkable lower bounds given in the special cases that appear 
  in \cite{lower}. 
  \qed
\end{remark}

The complete positivity machinery provides a modestly streamlined
proof  and modest improvement of the inequality portion of \cite[Theorem~A]{lower}.
It will resonate with those familiar with complete positivity. 

\begin{theorem}
    \label{p:DMV-Prod+lower}
    If $\Phi:\Mnc{n}\times \Mnc{n}\to \Mnc{m}$ is a \jcp product with Choi-Kraus form
    as in item~\ref{it:CPPKraus} of Theorem~\ref{thm:CPPForms} and $A,B\in \Mnc{n}$ are non-zero, then
\[
    \Phi(AA^*,BB^*) \succeq  \frac{1}{u} \sum_k R_kR_k^*,
\]
 where the $R_k$ are the column vectors,
\[
  R_k = V_k^* \vec(BA^{\top}), 
\]
and  $u=\min\{\Rank AA^*, \, \Rank BB^*\}.$ 
\end{theorem}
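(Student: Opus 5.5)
Since $\Phi(AA^*,BB^*)=\sum_k V_k^*(AA^*\otimes BB^*)V_k$, the statement reduces termwise to the single inequality
\[
 AA^*\otimes BB^* \succeq \frac1u\, \vec(BA^\top)\vec(BA^\top)^*,
\]
which we then compress by each $V_k$ and sum. Writing $AA^*\otimes BB^* = (A\otimes B)(A\otimes B)^*$, the compression $V_k^*(A\otimes B)(A\otimes B)^*V_k \succeq \frac1u V_k^*\, xx^*\, V_k = \frac1u R_kR_k^*$ follows from the displayed inequality with $x=\vec(BA^\top)$. So the work is entirely in the displayed rank-one lower bound.

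To prove it, we first identify $x$ as $(A\otimes B)$ applied to a fixed vector. Let $\omega=\sum_{i} e_i\otimes e_i$ (the unnormalized maximally entangled vector). With the paper's convention $\vec(M)=\sum_s e_s\otimes Me_s$, we have $(A\otimes B)\omega=\sum_i Ae_i\otimes Be_i$. Expanding $Ae_i=\sum_r a_{ri}e_r$ gives $\sum_r e_r\otimes B(\sum_i a_{ri}e_i)=\sum_r e_r\otimes BA^\top e_r=\vec(BA^\top)$. Hence $x=(A\otimes B)\omega$. (If the vec convention produced $AB^\top$ instead, we would use $\omega$ with the roles of the factors swapped; this is just index bookkeeping to check.)

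It now suffices to show $(A\otimes B)(A\otimes B)^*\succeq \frac1u (A\otimes B)\omega\omega^*(A\otimes B)^*$. This does not follow directly from $I\succeq \omega\omega^*/n$, which only gives the constant $1/n$, so we must exploit ranks. Let $P$ and $Q$ be the orthogonal projections onto the ranges of $A^*$ and $B^*$, so that $A=AP$ and $B=BQ$. Then $(A\otimes B)\omega=(A\otimes B)(P\otimes Q)\omega$, and it suffices to prove $P\otimes Q\succeq \frac1u (P\otimes Q)\omega\omega^*(P\otimes Q)$, that is, $\|(P\otimes Q)\omega\|^2\le u$. This is the main step. We compute $\|(P\otimes Q)\omega\|^2=\sum_{i,j}\langle Pe_i,e_j\rangle\langle Qe_i,e_j\rangle=\trace(PQ^\top)$, where $Q^\top$ is again a projection of rank $\Rank B$. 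For projections, $\trace(PQ^\top)\le\min(\trace P,\trace Q^\top)=\min(\Rank A,\Rank B)=u$, since $\trace(PQ')=\trace(PQ'P)\le\trace P$ and symmetrically. Finally, $\Rank AA^*=\Rank A$ and $\Rank BB^*=\Rank B$. The hypothesis that $A,B$ are non-zero guarantees $u\ge1$.

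Assembling the steps: for any $y$, Cauchy--Schwarz gives $|\langle y,(P\otimes Q)\omega\rangle|^2\le u\,\|(P\otimes Q)y\|^2$, which yields the projection inequality. Conjugating by $A\otimes B$, compressing by $V_k$, and summing over $k$ gives $\Phi(AA^*,BB^*)\succeq\frac1u\sum_k R_kR_k^*$. The only delicate points are the vec/transpose bookkeeping and the trace bound $\trace(PQ^\top)\le u$.
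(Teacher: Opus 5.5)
Your proof is correct and follows the paper's route: reduce to $AA^*\otimes BB^*\succeq\frac1u\vec(BA^\top)\vec(BA^\top)^*$, write $\vec(BA^\top)=(A\otimes B)\sum_j e_j\otimes e_j$, insert the range projections $P\otimes Q$, and bound $\|(P\otimes Q)\sum_j e_j\otimes e_j\|^2$ by $u$. Your evaluation of that squared norm as $\trace(PQ^\top)=\sum_{i,j}P_{ij}Q_{ij}$ is the correct one. The paper's displayed $\sum_j P_{jj}Q_{jj}$ drops the cross terms, so its claimed sharper constant $1/t$ can fail; for example, take $A=B$ to be the projection onto $(e_0+e_1)/\sqrt2$. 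Your bound $\trace(PQ^\top)\le\min\{\Rank A,\Rank B\}$ gives exactly the stated inequality.
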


The proof yields a bit more than claimed.

\begin{proof}
    In view of the Choi-Kraus form, it suffices to prove
\[
   AA^* \, \otimes \, BB^* \succeq 
   \frac{1}{u} \vec(BA^\top) \vec(BA^\top)^*.
\]
To this end, let $A,B\in \Mnc{n}$ be given and let 
$e_0,\dots,e_{n-1}$ denote an orthonormal basis for $\CC^n.$
Letting 
\[
 \mathbf{e} =  \sum_{j=0}^{n-1} e_j \otimes e_j,
\]
 observe that 
\begin{equation}
    \label{e:e-in-action}
  (A\otimes B) \mathbf{e} = \vec(BA^\top),
\end{equation}
 where $A^\top$ is computed with respect to the given orthonormal basis. 

 Let $r$ and $s$ denote the ranks of $A$ and $B.$ Since $A\ne 0\ne B,$
 both  $r$ and $s$ are positive.  Let $P$ and
 $Q$ denote the projections onto 
 the orthogonal complements of the kernels
 of $A$ and $B$ respectively. Thus $P$ and $Q$
 are the projections onto the ranges of $A^*$ and $B^*$
 and moreover, $\trace P = r$ and $\trace Q=s.$
 In particular,  using $0\le P_{jj}, \, Q_{jj} \le 1,$
\[
  t = \sum_{j=1}^n P_{jj} Q_{jj}  \le \min\{r,s\}
\]
 and 
\[
 (A\otimes B) (P\otimes Q) (e_j\otimes e_j) 
  = (A\otimes B) (e_j \otimes e_j)
\]
 so that
\begin{equation}
    \label{e:ef-in-action}
(A\otimes B) \, \mathbf{g}  = 
 (A\otimes B) \, \mathbf{e},
\end{equation}
 where $\mathbf{g}= (P\otimes Q) \, \mathbf{e}.$
Since 
\[
  \|\mathbf{g}\|^2 
   = \sum_j (e_j^* P^*P e_j)\, (e_j^* Q^*Q e_j)
    =\sum_j P_{jj}Q_{jj} \le t,
\]
it follows that 
\begin{equation}
    \label{e:f-in-action}
0 \preceq G:=\mathbf{g}\, \mathbf{g}^*/t\preceq I.
\end{equation}
Hence, combining equations~\eqref{e:e-in-action}, \eqref{e:ef-in-action} and \eqref{e:f-in-action},
\[
\begin{split}
 AA^*\otimes BB^* & = (A\otimes B) \, (A\otimes B)^*
   \succeq  (A\otimes B) G (A\otimes B)^*
   \\[3pt] & = \frac{1}{t} (A\otimes B) \mathbf{g} \,
   \mathbf{g}^* (A\otimes B)^* 
    = \frac{1}{t} (A\otimes B) \mathbf{e}\, \mathbf{e}^*
   (A\otimes B)^* 
 \\[3pt] & = \frac{1}{t} \vec(BA^\top) \, \vec((BA^\top)^*),
 \end{split}
\]
 which proves a bit more than claimed.
 \end{proof}

In \cite[Theorem 2.1]{lower} the authors report on
their discovery of a remarkably clean lower bound
for the determinant of the {convolution} product. 
The following result offers a small improvement.

\begin{theorem}
    \label{t:lbJ}
       For $n\ge 2$ and  psd matrices $P,Q\in M_n(\CC),$
 \[
  \det (\CP(P,Q)) \ge \left ( p_{00}^{\frac{n}{n-1}} \det(Q)^{\frac{1}{n-1}} + q_{00}^{\frac{n}{n-1}} \det(P)^{\frac{1}{n-1}} \right )^{n-1},
 \]
  where $\CP$ is the convolution product.  Moreover, equality holds when $n=2.$
\end{theorem}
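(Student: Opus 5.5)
The plan is to use the Choi--Kraus form \eqref{e:Choi-for-Gat}, $\CP(P,Q)=V^*(P\otimes Q)V$, together with a rank-one splitting of $P$ and $Q$ along $e_0$. This reduces the determinant to a sum of two positive matrices of size $n-1$, to which Minkowski's determinant inequality applies.

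By continuity, first assume $p_{00},q_{00}>0$. If, say, $p_{00}=0$, then the first row and column of $P$ vanish and $\det P=0$. In that case it is better to argue directly or by perturbing $P$ to $P+\varepsilon I$ and letting $\varepsilon\to 0$.

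Write $P=xx^*+\tilde P$ with $x=Pe_0/\sqrt{p_{00}}$ and $\tilde P=P-xx^*$. Here $\tilde P\succeq 0$ is the Schur complement, padded so that its first row and column are zero. Write $P'$ for the lower-right $(n-1)\times(n-1)$ block of $\tilde P$; then $\det P=p_{00}\det P'$. Define $Q=yy^*+\tilde Q$ and $Q'$ in the same way.

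Since all four tensor terms are psd, we have $P\otimes Q\succeq xx^*\otimes yy^*+xx^*\otimes\tilde Q+\tilde P\otimes yy^*$. Applying $V^*(\cdot)V$ and using bilinearity gives
\[
 \CP(P,Q)\succeq zz^*+T_x\tilde QT_x^*+T_y\tilde PT_y^*.
\]
Here $z=\CP$-convolution of $x$ and $y$ (so $z_0=\sqrt{p_{00}q_{00}}$), and $T_x$ is the lower triangular Toeplitz matrix with first column $x$. The identity $\CP(xx^*,R)=T_xRT_x^*$ is a direct check from the definition of $\CP$, or from \eqref{e:VforGat}: $V^*(x\otimes I)=\sum_j x_j\fw^j=T_x$.

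Because $T_x,T_y$ are lower triangular and $\tilde P,\tilde Q$ vanish in the first row and column, the last two terms have the form $0\oplus K_1$ and $0\oplus K_2$. Explicitly, $K_1=T_x'Q'T_x'^*$ and $K_2=T_y'P'T_y'^*$, where $T_x'$ is the lower-right $(n-1)$-block of $T_x$. This block is again lower triangular Toeplitz, with diagonal $\sqrt{p_{00}}$.

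The determinant of $zz^*+(0\oplus K)$ equals $|z_0|^2\det K$, by a Schur complement at the $(0,0)$ entry. Hence
\[
 \det\CP(P,Q)\ge p_{00}q_{00}\det(K_1+K_2),
\]
with $\det K_1=p_{00}^{n-1}\det Q/q_{00}$ and $\det K_2=q_{00}^{n-1}\det P/p_{00}$. Minkowski's inequality $\det(K_1+K_2)^{1/(n-1)}\ge\det K_1^{1/(n-1)}+\det K_2^{1/(n-1)}$ then gives the bound after multiplying through by $(p_{00}q_{00})^{1/(n-1)}$. For example, $(p_{00}q_{00})^{1/(n-1)}\,p_{00}(\det Q/q_{00})^{1/(n-1)}=p_{00}^{n/(n-1)}\det Q^{1/(n-1)}$, and the $P$-term is symmetric.

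The main obstacle is keeping track of the block structure so that the exponent comes out as $n-1$ rather than $n$. Applying Minkowski directly in dimension $n$ loses this; the key is that the rank-one term $zz^*$ carries exactly the $(0,0)$ corner.

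For equality when $n=2$, compute both sides explicitly. Minkowski in dimension $1$ is an equality. The only discarded term, $\tilde P\otimes\tilde Q$, maps under $\CP$ to a matrix supported on the entry $(2,2)$, which lies outside $\Mnc{2}$, so it contributes nothing. Hence every inequality above is an equality when $n=2$.
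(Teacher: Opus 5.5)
Your proof is correct and is essentially the paper's argument in different packaging: with the paper's upper triangular factors $P=A^*A$, $Q=B^*B$ and contraction $R$, one has $D_0^*D_0=\CP(xx^*,Q)$ and $E^*E=\CP(\tilde P,yy^*)$, so the inequality \eqref{e:J1} discards exactly your term $\CP(\tilde P,\tilde Q)$. The remaining steps coincide with the paper's: the Schur complement at the $(0,0)$ entry (your $K_1,K_2$ are the paper's $\delta-\beta^*\beta/\alpha$ and $G^*G$), Minkowski in dimension $n-1$, and the $n=2$ equality argument; your rank-one splitting simply makes the discarded term explicit without triangular factorizations.
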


\begin{remark}\rm
\label{r:lbJ}
Using 
\[
\left ( p_{00}^{\frac{n}{n-1}} \det(Q)^{\frac{1}{n-1}} + q_{00}^{\frac{n}{n-1}} \det(P)^{\frac{1}{n-1}} \right )^{n-1}
  \ge p_{00}^n \det(Q) + q_{00}^n \det(P)
\]
recovers  \cite[Theorem 2.1]{lower}. Note, when $n=2$ the inequality is an equality since $n-1=1.$
\qed
\end{remark}

\begin{proof}
The proof  
uses Minkowski's inequality for psd
 $k\times k$ matrices $X$ and $Y,$
\begin{equation}
 \label{e:minkowski}
 \det(X+Y) \ge (\det(X)^{\frac{1}{k}} + \det(Y)^{\frac{1}{k}})^k.
\end{equation}
It will also use the elementary fact based upon the Schur complement that,
for a positive integer $m,$ 
a non-zero scalar $a,$ a vector $b\in \CC^m$ and an $m\times m$ matrix $d,$
\begin{equation}
    \label{e:schur-det} \det \begin{pmatrix} a & b^*  \\ b & d \end{pmatrix} = a \det(d- \frac{bb^*}{a}).
\end{equation}

Let  $\{e_0,e_1,\dots,e_{n-1}\}$ denote 
the (fixed) orthonormal basis for $\CC^n.$
Suppose $P,Q$ are psd and $n\times n.$  Write $P=A^*A$ and $Q=B^*B$ where
$A=(a_{ij})$ and $B=(b_{ij})$ are upper triangular.   Express
$\CP$ in the Choi-Kraus form from subsection~\ref{sss:convo-product}
\[
 \CP(P,Q) = V^*(P\otimes Q) V = V^* (A^* \otimes B^*) (A\otimes B) V,
\]
 where
 \[
  V= \sum_{j=0}^{n-1} e_j \otimes F^{*j}
 \]
  and $F$ the forward shift as in equation~\eqref{e:VforGat}.
 Let $R_k=e_k e_0^*$ for $k=1,2,\dots,n-1$ and let
$R$ denote the block diagonal 
$n\times n$ matrix with  $0$-th diagonal entry $I$ and  $k$-th
diagonal entry $R_k$ for $1\le k<n.$ In particular, $R^*R \preceq I$ and hence,
\begin{equation}
    \label{e:J1}
 \CP(P,Q) \succeq  V^* (A^* \otimes B^*) R^* \, R (A\otimes B) V.
\end{equation}

 Since $A$ is upper triangular, 
\[
  R (A\otimes B)V  = \begin{pmatrix} \sum_{j=0}^{n-1} a_{0j} R_0 B \fw^{*j} \\[3pt] \sum_{j=1}^{n-1} a_{1j} R_1 B \fw^{*j} 
   \\[3pt]  \vdots  \\[3pt]  a_{n-1,n-1}  R_{n-1} B\fw^{*(n-1)}
\end{pmatrix}.
\] 
 Let $D_k$ denote the $k$-th block entry of $R(A\otimes B)V$.
 Thus for $1\leq k\leq n-1,$ 
\[
 D_k = R_k \left (\sum_{j=k}^{n-1} a_{kj}  B\fw^{*j} \right ) = e_k \sum_{j=k}^{n-1}  a_{kj} b_{0,j-k} e_j^* \in \Mnc{n}.
\]
 In particular, for $1\le \ell\ne k \le n-1,$ 
\[
 D_\ell^*  D_k  =0. 
\]
Consequently, 
\begin{equation}
    \label{e:J1.5}
V^* (A^* \otimes B^*) R^* \,  R  (A\otimes B) V = D_0^* D_0 + \sum_{k=1}^{n-1} D_k^* D_k = D_0^* D_0 + E^* E,
\end{equation}
 where
\[
  E=\sum_{k=1}^{n-1} D_k. 
\]
 Observe that $E$ is upper triangular with first row $0$ and diagonal entries $a_{kk}b_{00}$ for $1\le k<n.$
  Thus, 
\[
 D_0^* D_0 = \begin{pmatrix} \alpha & \beta \\ \beta^* & \delta \end{pmatrix} 
\]
and 
\[
 E^*E =  \begin{pmatrix} 0 & 0 \\ 0 & G^*G \end{pmatrix},
\]
 where $G$ is upper triangular with diagonal entries $a_{kk}b_{00}$. 
 From the Schur complement formula of equation~\eqref{e:schur-det} for the determinant of a block matrix
\begin{equation}
    \label{e:J2}
 \det(D_0^*D_0 + E^*E)  = \alpha \det( \delta + G^*G - \frac{ \beta^* \beta}{\alpha})
  = \alpha \det ( (\delta- \frac{ \beta^* \beta}{\alpha}) + G^*G).
\end{equation}
Applying the Minkowski inequality \eqref{e:minkowski} with $k=n-1$ and $X=\delta- \frac{ \beta^* \beta}{\alpha}$
and $Y=G^*G$ from equation~\eqref{e:J2} and then equation~\eqref{e:schur-det}
 to $D_0^*D_0,$
\begin{equation}
    \label{e:J3}
    \begin{split}
  \det(D_0^* D_0 + E^*E)   & \ge  \left [ \left (\alpha \det ( \delta-\frac{ \beta^* \beta}{\alpha}) \right )^{\frac{1}{k}}  +  \left (\alpha \det(G^*G)\right )^{\frac{1}{k}} \right ]^k
  \\[3pt] & = \left [ \det(D_0^* D_0)^{\frac{1}{k}}  +  \left (\alpha \det(G^*G)\right )^{\frac{1}{k}} \right ]^k.
\end{split}
\end{equation}

 Since $B$ is upper triangular, so is 
 $D_0.$ Moreover,  the  diagonal entries of $D_0$ are  $a_{00} b_{kk}.$
  Hence
\begin{equation}
    \label{e:J4}
   \det(D_0^*D_0) = \big \vert \det(D_0)\big\vert^2 = |a_{00}|^{2n} \det(B^*B) = p_{00}^n \det(Q).
\end{equation}
Since $G$ is also upper triangular with diagonal entries $b_{00} a_{kk}$ (for $1\le k \le n-1$) 
  and $\alpha = |b_{00}|^2 |a_{00}|^2 = p_{00} q_{00},$ 
\begin{equation}
    \label{e:J5}
\begin{split}
  \alpha \det(G^*G)&  = \big \vert a_{00} b_{00}  \det(G) \big  \vert^2 = |b_{00}|^{2n}  | a_{00} a_{11}\cdots a_{n-1,n-1} |^2
  \\[3pt] &  = q_{00}^n \,  |\det(A)|^2 =  q_{00}^n \det(P).
\end{split}
\end{equation}
 Combining equations~\eqref{e:J1}, \eqref{e:J1.5}, \eqref{e:J3}, \eqref{e:J4} and \eqref{e:J5} gives, 
 \[
\begin{split}
 \det(\CP(P,Q)) & \ge  \det(D_0^* D_0 + E^*E) 
 \\[3pt] & \ge \left (\det(D_0^*D_0)^{\frac{1}{n-1}} + 
  \left (\alpha \det(G^*G)\right )^{\frac{1}{n-1}} \right )^{n-1}
 \\[3pt] &  = \left ( p_{00}^{\frac{n}{n-1}} \det(Q)^{\frac{1}{n-1}} + q_{00}^{\frac{n}{n-1}} \det(P)^{\frac{1}{n-1}} \right )^{n-1},
 \end{split}
\]
  completing the proof of the inequality of Theorem~\ref{t:lbJ}.
  
 That equality holds when $n=2$ can be checked by direct computation. Alternatively, 
 observe that when $n=2,$ because the last row of $V$ is $0,$ equality holds in
 the inequality of equation~\eqref{e:J1}; and since $\alpha \delta - \beta^*\beta$ and $G^*G$
 are non-negative real numbers, equality also holds in the inequality of equation~\eqref{e:J3}.
\end{proof}

\begin{remark}
(1) We discerned no clean statement in the further improvements of the inequality of Theorem~\ref{t:lbJ} based upon 
 more refined choices for the $R_j$ in the proof given here.

\bigskip

(2)  The proof generalizes with the $V$ for the \JP
 replaced by $V$ of the form,
\begin{equation}
  \label{e:altV}
  V=\sum_{i+j\le k} c_{ijk} (e_i\otimes e_j) e_k^*.  
\end{equation}
 Equivalently, with $V$ that  satisfy
\begin{equation}
 \label{e:perm-causal-alt}
   \langle Ve_k,e_i\otimes e_j \rangle =0 
\end{equation}
 for $i+j>k.$ In the case that 
  $c_{ijk}=1$ when $i+j=k$ the conclusion of Theorem~\ref{t:lbJ} is unchanged.  For general $c_{ijk},$ the proof
 goes through with a conclusion that includes the terms $c_{ijk}$ for $i+j=k.$ The
 details are left to the curious reader.

\bigskip

(3) The $V$ of equation~\eqref{e:altV} offers another take on causality and in particular
 the notion of causality appearing in \cite{lower}. See Section~\ref{sec:Causal}.
 (Warning:  a shift of indices to  start at $(i,j,k)$ at $(1,1,1)$ is different than starting at $(0,0,0)$
 as we do here.) 
\qed
\end{remark}

 The same strategy used to prove Theorems~\ref{t:lbJ} and \ref{p:DMV-Prod+lower} also recovers well known lower bounds for
 the Schur product.

\begin{proposition}
  For psd matrices $P,Q\in \Mnc{n},$ the determinant of $S(P,Q)$ exceeds $\det(Q) \, \prod_{j=0}^{n-1} P_{jj}$
  and thus $\det(P)\, \det(Q).$
\end{proposition}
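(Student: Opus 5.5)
The plan is induction on $n$, peeling off the $(0,0)$ entry with the Schur complement formula~\eqref{e:schur-det}, in the spirit of the proof of Theorem~\ref{t:lbJ}. The extra positivity needed at each step comes from the fact that $S$ is \jcp, so in particular positivity preserving (Subsection~\ref{sss:Schur-prod}).

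First I reduce to positive definite $P,Q$. Replace $P,Q$ by $P+\epsilon I$, $Q+\epsilon I$ and let $\epsilon\to 0$; both sides are continuous in $\epsilon$. The case $n=1$ is trivial. For $n\ge 2$, write in block form
\[
 P=\begin{pmatrix} p_{00} & u^* \\ u & P_1\end{pmatrix},\qquad
 Q=\begin{pmatrix} q_{00} & v^* \\ v & Q_1\end{pmatrix},
\]
with $p_{00},q_{00}>0$. Let $P/p_{00}=P_1-uu^*/p_{00}$ and $Q/q_{00}=Q_1-vv^*/q_{00}$ be the Schur complements; both are positive definite. Then
\[
 S(P,Q)=\begin{pmatrix} p_{00}q_{00} & S(u,v)^* \\ S(u,v) & S(P_1,Q_1)\end{pmatrix},
\]
where $S(u,v)$ is the entrywise product of vectors. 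By~\eqref{e:schur-det},
\[
 \det S(P,Q)=p_{00}q_{00}\,\det\Big(S(P_1,Q_1)-\tfrac{S(u,v)S(u,v)^*}{p_{00}q_{00}}\Big).
\]

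The key step is the matrix identity
\[
 S(P_1,Q_1)-\frac{S(uu^*,vv^*)}{p_{00}q_{00}}
 = S\big(P_1,\,Q/q_{00}\big) + S\Big(P/p_{00},\,\frac{vv^*}{q_{00}}\Big),
\]
which I would verify by expanding both sides bilinearly; it uses $S(u,v)S(u,v)^*=S(uu^*,vv^*)$. The second term on the right is the Schur product of two psd matrices, hence psd. Therefore the Schur complement dominates $S(P_1,Q/q_{00})$ in the Loewner order, and determinants are monotone on psd matrices. Applying the induction hypothesis to the $(n-1)\times(n-1)$ pair $(P_1,Q/q_{00})$ gives
\[
 \det S(P,Q)\ \ge\ p_{00}q_{00}\,\det(Q/q_{00})\prod_{j=1}^{n-1}P_{jj}.
\]
Since $\det Q=q_{00}\det(Q/q_{00})$, this is exactly $\det(Q)\prod_{j=0}^{n-1}P_{jj}$.

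The final assertion then follows from Hadamard's inequality $\prod_j P_{jj}\ge\det P$. This can itself be seen as the special case $Q=I$ of what was just proved, since $S(P,I)=\operatorname{diag}(P_{jj})$.

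I expect the main obstacle to be finding the decomposition of the Schur complement above. The right split puts the full $P_1$ against the Schur complement of $Q$ and the Schur complement of $P$ against the rank-one remainder $vv^*/q_{00}$. Once it is found, everything else is the routine Schur complement bookkeeping already used in the proof of Theorem~\ref{t:lbJ}.
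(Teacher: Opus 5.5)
Your proof is correct, but it takes a different route from the paper. You give the classical inductive proof of Oppenheim's inequality. You peel off the $(0,0)$ entry via \eqref{e:schur-det} and split the Schur complement as $S(P_1,Q/q_{00})+S(P/p_{00},vv^*/q_{00})$. The cross terms $\pm S(P_1,vv^*)/q_{00}$ cancel, so the identity holds. After that, only the Schur product theorem and monotonicity of $\det$ on psd matrices are needed.

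The paper's proof is not inductive. It writes $S(P,Q)=V^*(A\otimes B)^*(A\otimes B)V$ with $V=\sum_j (e_j\otimes e_j)e_j^*$, where $P=A^*A$ and $Q=B^*B$ with $A,B$ upper triangular. It then compresses by an isometry $R$ built from the unit vectors $Ae_r/\|Ae_r\|$. This gives $S(P,Q)\succeq T^*T$, where $T=R^*(A\otimes B)V$ is upper triangular with diagonal entries $\|Ae_j\|B_{jj}$. Hence $\det(T^*T)=\prod_j P_{jj}\,\det(Q)$ in one stroke.

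The paper's route buys two things. It produces an explicit Loewner-order minorant of $S(P,Q)$. It also follows the same ``compress by a contraction, read off a triangular determinant'' template used in Theorems~\ref{p:DMV-Prod+lower} and~\ref{t:lbJ}, which is the point of that section.

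Your route is more elementary. It needs only positivity preservation of $S$, not its Choi--Kraus form. However, it exploits the entrywise structure of $S$: the corner $p_{00}q_{00}$ and the trailing block $S(P_1,Q_1)$ decouple. So it does not transfer as directly to other jcp products such as $\CP$.

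One small slip: setting $Q=I$ in your inequality only yields $\prod_j P_{jj}\ge\prod_j P_{jj}$. Hadamard's inequality instead comes from setting $P=I$, which gives $\prod_j Q_{jj}\ge\det Q$; alternatively, use the commutativity of $S$.
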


 \begin{proof}
     Factor $P=A^*A$  and $Q=B^*B$ for $A$ and $B$ upper triangular and $A_{jj},\, B_{jj} \ge 0.$
     The Schur product has rank one Choi-Kraus $S(A,B) =V^* (A\otimes B)V$ with $V=\sum_{j=0}^{n-1} (e_j\otimes e_j)e_j^*$ as described in Subsection~\ref{sss:Schur-prod}.
     A computation, using $A$ and $B$ are upper triangular gives,
    \[
    w_j:= (A\otimes B) Ve_j = (A\otimes B) e_j\otimes e_j = Ae_j\otimes Be_j = \sum_{u\le j} \sum_{v\le  j} A_{uj}B_{v j} e_u \otimes e_v.
    \]

Let $\alpha_j = (Ae_j) \|Ae_j\|^{-1}.$ Thus $\alpha_j$ is a unit vector in the 
direction of the $j$-th column of $A$ so that $\sum_{u\le j} \overline{\alpha_{j,u}^*} A_{uj} =\|Ae_j\|.$  For instance, $\alpha_0=e_0$ and 
$\alpha_1= \alpha_{10}e_0+\alpha_{11}e_1.$
Set 
\[
 R = \sum  (e_r \otimes \alpha_r) e_r^* 
\]
 and verify,
\[
 T:=R^* (A\otimes B) V =\sum_r   e_r (\alpha_r^* \otimes e_r^*)   \sum_j w_j e_j^* =  
    \sum_j \sum_{u,r\le j}   (\alpha_r^* e_u)  A_{uj} B_{rj} e_r e_j^* 
\]
 is upper triangular with diagonal entries,
\[
 T_{jj} =  (\sum_{u\le j} \overline{\alpha_{ju}}A_{uj})  B_{jj}  =\|Ae_j\| B_{jj}.
\]
Hence, since $S(P,Q)\succeq T^*T,$ 
\[
\det(P,Q)\succeq \det(T^*T)  =|\det(T)|^2 = \prod \|Ae_j\|^2 \, \prod  |B_{jj}|^2 
 = \prod P_{jj}\, \det(Q). \qedhere
\]
 \end{proof}

\section{Causal products}
\label{sec:Causal}

The notion of a causal product appearing in \cite{lower}, which we call here \df{permutation causal},
has several natural extensions to 
the category of \jcp products. Intuitively, a  
 product $\Phi:\Mnc{n}\times \Mnc{n}\to \Mnc{n}$ is causal if the  $(i,j)$ entry of $\Phi(A,B)$
 depends only on entries of $A$ and $B$ prior to $(i,j)$-th entry. 

 Formally we define left and right causal products (see Subsection~\ref{sec:causal}) in terms of left and right {\it upper triangular}
 matrices in $\Mnmc{n^2}{n}$ (see Subsection~\ref{sec:left-right}) and a product is causal if it is both left and right causal.  The 
 connection between causal and permutation causal products is given in Proposition~\ref{p:perm-caus}. Throughout the section, for $\ell=0,\dots,n-1$ we define $P_\ell := (I_{\ell+1} \oplus 0_{n-\ell-1})$\index{$P_\ell$}. 

\subsection{Left and right upper triangular matrices}
\label{sec:left-right}

A matrix $V\in \CC^{n^2 \times n}$ is 
\df{left upper triangular} (with respect to the orthonormal basis
$\{e_0,\dots,e_{n-1}\}$  of $\CC^n$) if, 
for each $k =0,\dots,n-1$ and all $i>k,$ 
\begin{equation}
    \label{e:V-upper}
\langle Ve_k, e_i \otimes e_j \rangle = 0. 
\end{equation}
For example, with $\vn=3$ and using the ordering convention for tensor products
from Subsection~\ref{sec:guide}, the transpose of a left upper triangular $\vn^2 \times \vn$ matrix $V$ has the form
\[
V^\top = \begin{pmatrix}
    a_{0,0} & a_{0,1} & a_{0,2} & 0 & 0 & 0 & 0 & 0 & 0 \\
    b_{0,0} & b_{0,1} & b_{0,2} & b_{1,0} & b_{1,1} & b_{1,2} & 0 & 0 & 0\\
    c_{0,0} & c_{0,1} & c_{0,2} & c_{1,0} & c_{1,1} & c_{1,2} & c_{2,0} & c_{2,1} & c_{2,2}\\
\end{pmatrix}.
\]

A matrix $V\in \CC^{\vn^2\times \vn}$  is \df{right upper triangular} if 
for each $k =0,\dots,n-1$ and all $j>k,$
\begin{equation}
    \label{e:V-lower}
\langle Ve_k, e_i \otimes e_j \rangle = 0. 
\end{equation}
In the case that $n=3$, the transpose of a right upper triangular matrix $V$ has the form
\[
V^\top = \begin{pmatrix}
    a_{0,0} & 0 & 0 & a_{1,0} & 0 & 0 & a_{2,0} & 0 & 0 \\
    b_{0,0} & b_{0,1} & 0 & b_{1,0} & b_{1,1} & 0 & b_{2,0} & b_{2,1} & 0\\
    c_{0,0} & c_{0,1} & c_{0,2} & c_{1,0} & c_{1,1} & c_{1,2} & c_{2,0} & c_{2,1} & c_{2,2}\\
\end{pmatrix}.
\]

 Finally, the matrix $V$ is \df{upper triangular} if it is both
 left and right upper triangular. For example, in the $n=3$ case, $V$ 
 is upper triangular means its transpose has the form 
 \[
V^\top = \begin{pmatrix}
    a_{0,0} & 0 & 0& 0 & 0 & 0 & 0 & 0 & 0 \\
    b_{0,0} & b_{0,1} & 0 & b_{1,0} & b_{1,1} & 0& 0 & 0 & 0\\
    c_{0,0} & c_{0,1} & c_{0,2} & c_{1,0} & c_{1,1} & c_{1,2} & c_{2,0} & c_{2,1} & c_{2,2}\\
\end{pmatrix}.
\]
 
Letting $\Pi_\vn : \mathbb{C}^n \otimes \mathbb{C}^n \mapsto \mathbb{C}^n \otimes \mathbb{C}^n$
\index{$\Pi_n$} denote the \df{canonical shuffle} defined by $\Pi_\vn (e_i \otimes e_j) = e_j \otimes e_i$, a matrix $V$ is left upper triangular if and only if $\Pi_\vn V$ is right upper triangular. 

\begin{lemma}
 \label{lemma:UpperTriAltDef}
 A matrix  $V\in \Mnmc{n^2}{n}$
is left upper triangular if and only if for each pair $k,\ell \in \{0,1,\dots, \vn-1\}$ with $k \leq \ell,$ 
    \[
    (P_\ell \otimes I_\vn)Ve_k = Ve_k.
    \]
    Equivalently,
    \[
    ((I-P_\ell) \otimes I_\vn)Ve_k = 0.
    \]
    In particular,
     \[
    (P_\ell \otimes I_\vn)VP_\ell= V P_\ell.
    \]

The matrix $V$ is upper triangular if and only if 
\[
V e_k \in \mathrm{span} (\{e_i \otimes e_j\}_{i,j \leq k}),
\]
for each  $k=0,1,\dots, \vn-1.$
\end{lemma}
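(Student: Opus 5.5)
This lemma only unwinds the definitions, so the plan is to translate each condition into the coordinate expansion of $Ve_k$ in the basis $\{e_i\otimes e_j\}$. Write
\[
Ve_k=\sum_{i,j} v^{(k)}_{ij}\, e_i\otimes e_j, \qquad v^{(k)}_{ij}=\langle Ve_k,e_i\otimes e_j\rangle .
\]
Since $P_\ell$ is the orthogonal projection onto $\spann\{e_0,\dots,e_\ell\}$, the operator $P_\ell\otimes I_n$ keeps exactly the terms with $i\le \ell$. Hence
\[
((I-P_\ell)\otimes I_n)Ve_k=\sum_{i>\ell}\sum_j v^{(k)}_{ij}\, e_i\otimes e_j .
\]
The two displayed conditions in the lemma are therefore trivially equivalent to each other. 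Both say that $v^{(k)}_{ij}=0$ for all $i>\ell$.

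For the forward direction, suppose $V$ is left upper triangular, so $v^{(k)}_{ij}=0$ whenever $i>k$. If $k\le \ell$, then $i>\ell$ implies $i>k$, so the coefficients with $i>\ell$ vanish and $((I-P_\ell)\otimes I_n)Ve_k=0$. For the converse, apply the hypothesis with $\ell=k$. It gives $v^{(k)}_{ij}=0$ for all $i>k$, which is the defining condition \eqref{e:V-upper}.

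For the "in particular" identity $(P_\ell\otimes I_n)VP_\ell=VP_\ell$, I would test both sides on basis vectors. For $k\le \ell$, $P_\ell e_k=e_k$, and the identity reduces to the equivalent condition just proved. For $k>\ell$, $P_\ell e_k=0$, so both sides vanish.

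For the last claim, $V$ is upper triangular exactly when \eqref{e:V-upper} and \eqref{e:V-lower} both hold. That means $v^{(k)}_{ij}=0$ whenever $i>k$ or $j>k$, so the only coefficients that can be nonzero are those with $i\le k$ and $j\le k$. This is precisely the statement $Ve_k\in\spann\{e_i\otimes e_j\}_{i,j\le k}$, and both directions are immediate.

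None of these steps is a real obstacle. The only point needing care is to note that $P_\ell\otimes I_n$ acts on the first tensor factor, which is the one indexed by $i$ under the ordering convention of Subsection~\ref{sec:guide}; this is what matches it to left (rather than right) upper triangularity.
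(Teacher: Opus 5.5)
Your proposal is correct and follows essentially the same route as the paper: both unwind the definition through the coefficients $\langle Ve_k, e_i\otimes e_j\rangle$, taking $\ell=k$ for one direction and using $i>\ell\ge k$ for the other, and both read the upper triangular characterization directly off \eqref{e:V-upper} and \eqref{e:V-lower}. You also verify the ``in particular'' identity $(P_\ell\otimes I_n)VP_\ell=VP_\ell$ on basis vectors, which the paper states without proof.
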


\begin{proof}
    First suppose that $(P_\ell \otimes I_\vn)Ve_k = Ve_k$ for all $k$ with $\ell \geq k$. For  $\ell = k$ and $i > k,$  
    \[
    \langle V e_k, e_i \otimes e_j \rangle = \langle (P_k \otimes I_\vn)Ve_k, e_i \otimes e_j \rangle = \langle Ve_k, P_k e_i \otimes e_j \rangle = 0,
    \]
    where the last equality follows from $P_k e_i =0$ whenever $i > k$. 

    Now suppose $V$ is left upper triangular. Since  $(I-P_\ell) e_i = e_i$ for $i > \ell,$ if $i>\ell \geq k,$ then 
    \[
    0 = \langle Ve_k, e_i \otimes e_j \rangle = \langle Ve_k, (I-P_\ell) e_i \otimes e_j\rangle = \langle  ((I-P_\ell) \otimes I_\vn)Ve_k, e_i \otimes e_j \rangle. 
    \]
    On the other hand, if $i \leq \ell$, then $(I-P_\ell) e_i =0$ from which it follows that
    \[
    0 = \langle  ((I-P_\ell)\otimes I_\vn) Ve_k, e_i \otimes e_j \rangle.
    \]
    Thus $0 = \langle ((I-P_\ell) \otimes I_\vn)Ve_k, e_i \otimes e_j \rangle$ for all $i,j,$ which implies $((I-P_\ell) \otimes I_\vn)Ve_k = 0$. 

    That $V$ is upper triangular if and only if $V e_k \in \mathrm{span} (\{e_i \otimes e_j\}_{i,j \leq k})$ for all $k$ is immediate
    from equations~\eqref{e:V-upper} and \eqref{e:V-lower}.
\end{proof}

\subsection{Left and right causal products}
\label{sec:causal}
A product $\Phi:\Mnc{\vn}\times \Mnc{\vn}\to \Mnc{\vn}$ is \df{left causal} if it is a \jcp product and has a Choi-Kraus
representation as in item~\ref{it:CPPKraus} of Theorem~\ref{thm:CPPForms} where
each $V_k$ is left upper triangular.  The notions of \df{right causal}
and \df{causal} are defined similarly.   The following theorem
says that $\Phi$ is left causal if and only if $\Phi(A,B)_{j,\ell}$
depends only on the entries $A_{i,k}$ for $i\le j$ and
$k\le \ell;$  and that 
if $\Phi$ is left causal, then in every Choi-Kraus representation
of $\Phi$ the coefficients $V_k$ are left upper triangular.

\begin{theorem}
 \label{t:causal}
    Let $\Phi:\Mnc{\vn}\times \Mnc{\vn}\to \Mnc{\vn}$ be a \jcp product with Choi-Kraus form 
\[
 \Phi(A,B) = \sum_k V_k^* (A \otimes B) V_k,
\]
where $V_k \in \mathbb{C}^{\vn^2 \times \vn}$ for each $k.$ Also define $P_\ell = (I_{\ell+1} \oplus 0_{n-\ell-1})$ for each $\ell = 0,\dots,n-1.$ 
The following are equivalent.

\begin{enumerate}[(i)]\itemsep=6pt
\item  \label{i:causal:i}
  The product $\Phi$ is left causal;
\item \label{i:causal:ii}
 Each  $V_k$ is left upper triangular;  and 
\item \label{i:causal:iii} For each $j,\ell \in {0,\dots, \vn-1},$ 
\[
P_j \Phi(A,B) P_\ell =P_j \Phi ( P_j A P_\ell, \, B ) P_\ell. 
\]
\end{enumerate}
\end{theorem}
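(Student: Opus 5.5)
The plan is to prove (ii)$\Rightarrow$(iii)$\Rightarrow$(ii), with (ii)$\Rightarrow$(i) immediate from the definition of left causal. For (i)$\Rightarrow$(ii), note that left causal only supplies \emph{some} Choi-Kraus representation with left upper triangular coefficients. By (ii)$\Rightarrow$(iii) applied to that representation, (iii) holds. Since (iii) is a property of $\Phi$ alone, the step (iii)$\Rightarrow$(ii) applied to the given representation $\{V_k\}$ then yields (ii).

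For (ii)$\Rightarrow$(iii), I use Lemma~\ref{lemma:UpperTriAltDef}. Since $V_kP_\ell = (P_\ell\otimes I)V_kP_\ell$ and, taking adjoints, $P_jV_k^* = P_jV_k^*(P_j\otimes I)$, we get
\[
P_jV_k^*(A\otimes B)V_kP_\ell = P_jV_k^*\big((P_jAP_\ell)\otimes B\big)V_kP_\ell .
\]
Summing over $k$ gives (iii).

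For (iii)$\Rightarrow$(ii), apply (iii) with $A=(I-P_j)X(I-P_j)$ for an arbitrary psd $X$. Then $P_jAP_\ell=0$ for every $\ell$, so by bilinearity $P_j\Phi(A,B)P_j=0$. Take $B=I$ (or any positive definite $B$) and $X=I$, so that $A=I-P_j$. This gives
\[
0=\sum_k P_jV_k^*\big((I-P_j)\otimes I\big)V_kP_j=\sum_k \big\|\big((I-P_j)\otimes I\big)V_kP_j\big\|^2
\]
in the psd sense. Since each summand is psd and they sum to zero, each vanishes, so $((I-P_j)\otimes I)V_kP_j=0$ for all $k$ and $j$. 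In particular $((I-P_j)\otimes I)V_ke_i=0$ for $i\le j$, and Lemma~\ref{lemma:UpperTriAltDef} then says each $V_k$ is left upper triangular.

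The main point to get right is the use of positivity in this last step. The psd sum $\sum_k W_k^*W_k=0$ forces every $W_k=0$, and this is what makes the conclusion hold for \emph{every} Choi-Kraus representation, not just one. The rest is bookkeeping with the projections $P_\ell\otimes I$, so I expect no further obstacle.
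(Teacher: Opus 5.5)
Your proof is correct and follows the paper's route: you establish (ii)$\Rightarrow$(i)$\Rightarrow$(iii)$\Rightarrow$(ii), using Lemma~\ref{lemma:UpperTriAltDef} to compress $A$ to $P_jAP_\ell$ exactly as the paper does. The only difference is cosmetic and lies in (iii)$\Rightarrow$(ii): you test directly with $A=I-P_j$, $B=I$ and use that a vanishing sum of psd terms $W_k^*W_k$ forces each $W_k=0$, whereas the paper argues by contradiction with the rank-one tests $A=e_re_r^*$, $B=e_je_j^*$ that isolate a single nonzero entry of some $V_{k_0}$, and the underlying positivity argument is the same.
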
 

\begin{proof}
Item~\ref{i:causal:ii} immediately implies item~\ref{i:causal:i}.
To prove that item~\ref{i:causal:i} implies item~\ref{i:causal:iii},
 suppose that  $\Phi$ is left causal. Hence $\Phi$ has a Choi-Kraus representation
 \[
  \Phi(A,B) =\sum_k W_k^* (A\otimes B) W_k,
 \]
  where each $W_k$ is left upper triangular. 
  By Lemma~\ref{lemma:UpperTriAltDef}  $(P_\ell \otimes I) W_k e_j = W_k e_j$ for all
   $0\le \ell \le \vn-1$ and $0\le j \leq \ell$. Hence,
 \[
    (P_\ell \otimes I) W_k P_\ell = W_k P_\ell,
 \]
 for all $0\le \ell \le \vn-1.$ 
 It follows, for all $0\le j,\ell \le \vn-1,$ that 
 \begin{align*}
  P_j\Phi(A,B) P_\ell   &= \sum_k P_j W_k^* (A\otimes B) W_k P_\ell\\
  &  = \sum_k P_j W_k^* (P_j \otimes I_\vn) (A \otimes B) (P_\ell \otimes I_\vn) W_k P_\ell
  \\&=  \sum_k P_j W_k^* \big(\big(P_j A P_\ell\big) \otimes B\big) W_k P_\ell
  \\& =P_j \Phi\big ( P_j A P_\ell, \, B \big )P_\ell.
 \end{align*}

  To prove item~\ref{i:causal:iii} implies item~\ref{i:causal:ii},
  suppose that $V_{k_0}$ is not left upper triangular for some $k_0$. That is, there exist indices $i < r$ and $j$ such that
 \[
    \langle V_{k_0} e_i, e_r \otimes e_j \rangle = c \neq 0. 
 \]
 Taking $A = e_r e_r^*$ and $B = e_j e_j^*$ we have that
 \begin{align*}
    \langle V_{k_0}^* (A \otimes B) V_{k_0} e_i, e_i \rangle &= \langle (e_r e_r^*) \otimes (e_j e_j^*) V_{k_0} e_i, V_{k_0} e_i \rangle \\
    & = \langle (e_r \otimes e_j) (e_r \otimes e_j)^* V_{k_0} e_i, V_{k_0} e_i \rangle \\
    & = \langle (e_r \otimes e_j)^* V_{k_0} e_i, (e_r \otimes e_j)^* V_{k_0} e_i \rangle \\
    & = |c|^2 > 0. 
 \end{align*}
 Noting that $P_i e_i = e_i$, it follows that
 \[
\Big\langle \sum_k P_i V_{k}^* (A \otimes B) V_{k} P_i e_i, e_i\Big\rangle > 0.
 \]
 Hence 
 \[
P_i  \Phi(A, B) P_i = P_i \left(\sum_k V_{k}^* (A \otimes B) V_{k}\right)P_i\neq 0.
 \]
 However, since $r > i$, we obtain $P_i e_r e_r^* P_i = 0$ from which it follows that
 \[
P_i\Phi (P_i A P_i, B )P_i = P_i\Phi(0,B)P_i = 0. 
 \]
We conclude
\[
P_i  \Phi(A, B) P_i  \neq P_i\Phi (P_i A P_i, B )P_i = P_i\Phi(0,B)P_i.
\]
That is, item~\ref{i:causal:iii} does not hold. 
\end{proof}

\begin{corollary}
 \label{c:causal}
    Let $\Phi, V_k,$ and $P_\ell$ be as in the statement of Theorem~\ref{t:causal}. Then
the following are equivalent.

\begin{enumerate}[(i)]\itemsep=6pt
\item  
  The product $\Phi$ is right causal;
\item 
 Each  $V_k$ is right upper triangular;  and 
\item For each $j,\ell \in {0,\dots, \vn-1},$ 
\[
P_j \Phi(A,B) P_\ell =P_j \Phi\big ( A, P_jB P_\ell ) P_\ell. 
\]
\end{enumerate}
\end{corollary}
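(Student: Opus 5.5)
The plan is to deduce Corollary~\ref{c:causal} from Theorem~\ref{t:causal} by conjugating with the canonical shuffle $\Pi_\vn$, which interchanges the two tensor factors. Define the flipped product $\widetilde{\Phi}(A,B):=\Phi(B,A)$. Since $\Pi_\vn$ is a self-adjoint unitary with $\Pi_\vn (A\otimes B)\Pi_\vn = B\otimes A$, we have
\[
 \widetilde{\Phi}(A,B)=\sum_k V_k^*(B\otimes A)V_k=\sum_k (\Pi_\vn V_k)^*(A\otimes B)(\Pi_\vn V_k).
\]
So $\widetilde{\Phi}$ is \jcp with Choi-Kraus coefficients $W_k:=\Pi_\vn V_k$. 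By the remark before Lemma~\ref{lemma:UpperTriAltDef}, $V$ is right upper triangular if and only if $\Pi_\vn V$ is left upper triangular. That remark is stated with left and right in the opposite roles, but since $\Pi_\vn^2=I$ it works in both directions. To check it directly, note $\langle \Pi_\vn V e_k, e_i\otimes e_j\rangle=\langle Ve_k, e_j\otimes e_i\rangle$.

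The key steps, in order, are as follows.

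First, (i)$\Leftrightarrow$ ``$\widetilde{\Phi}$ is left causal''. If $\Phi$ has some Choi-Kraus representation with right upper triangular coefficients $U_k$, then $\Pi_\vn U_k$ are left upper triangular coefficients for $\widetilde{\Phi}$. The converse follows by the same map, using $\Pi_\vn^2=I$.

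Second, (ii)$\Leftrightarrow$ ``each $W_k$ is left upper triangular''. This is immediate from the shuffle observation above.

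Third, (iii)$\Leftrightarrow$ condition \ref{i:causal:iii} of Theorem~\ref{t:causal} for $\widetilde{\Phi}$. This holds because
\[
P_j\widetilde{\Phi}(B,A)P_\ell=P_j\Phi(A,B)P_\ell \quad\text{and}\quad P_j\widetilde{\Phi}(P_jBP_\ell,A)P_\ell = P_j\Phi(A,P_jBP_\ell)P_\ell,
\]
and the condition is quantified over all $A,B$, so renaming the variables is harmless.

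Finally, apply Theorem~\ref{t:causal} to $\widetilde{\Phi}$ with the representation $\{W_k\}$. This gives the equivalence of the three translated conditions, and hence of (i), (ii) and (iii).

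The only point requiring care is the bookkeeping in the shuffle identity: verifying $\Pi_\vn(A\otimes B)\Pi_\vn^*=B\otimes A$ on elementary tensors, and confirming that the index condition \eqref{e:V-lower} maps exactly onto \eqref{e:V-upper}. Neither is a real obstacle.

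Alternatively, one could repeat the proof of Theorem~\ref{t:causal} verbatim with $I_\vn\otimes P_\ell$ in place of $P_\ell\otimes I_\vn$, using the right-handed analogue of Lemma~\ref{lemma:UpperTriAltDef}. For the counterexample step, one would take $A=e_je_j^*$ and $B=e_re_r^*$ with $r>i$. I would mention this as a check but present the shuffle reduction.
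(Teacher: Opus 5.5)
Your proposal is correct and is essentially the paper's own argument. The paper also passes to the flipped product $\Phi(B,A)$ with Choi-Kraus coefficients $\Pi_\vn V_k$, uses that $\Pi_\vn$ interchanges left and right upper triangularity, and applies Theorem~\ref{t:causal}; you simply spell out the translation of each of the three conditions, which the paper leaves implicit.
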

\begin{proof}
Combine Theorem~\ref{t:causal} with the facts that 
$\Phi$ is right causal if and only if the product $\Phi_\Pi$ defined by 
 $\Phi_\Pi(A,B)=\Phi(B,A)$  is left causal; and 
 $\Phi(A,B) = \sum_k V_k^* (A \otimes B) V_k$ is a Choi-Kraus representation for $\Phi$ if and only if
 \[
\sum_k V_k^* \Pi_\vn (A \otimes B) \Pi_\vn V_k
 \]
 is a Choi-Kraus representation for $\Phi_\Pi$.  
 \end{proof}

\begin{remark}\rm
\label{r:more-general}
  The notion of a causal product generalizes as follows.  Given families $\mathcal{L}$ and $\mathcal{R}$ of
  matrices, let 
\[
  \mathcal{C}_{\mathcal{L},\mathcal{R}} =\{\Phi:   L\, \Phi(A,B) \, R =  L\Phi(LAR,B)R\}.
\]
  Given a collection $\mathcal{F}$ of \jcp products $\Phi,$ there are maximal pairs $(\mathcal{L},\mathcal{R})$ such
  that
\[
 \mathcal{F} \subseteq \mathcal{C}_{\mathcal{L},\mathcal{R}}.
\]
 There are a number of natural related questions that we do not pursue here; e.g., when does a maximum
 pair $(\mathcal{L},\mathcal{R})$ exist?  
 \qed
\end{remark}

 \subsection{Permutation causal products}
We pause here to   caution that there is a stronger (more restrictive notion) of causal product that appears in 
 \cite{dominiqueOppenheim} that  we refer to here as
 \df{permutation  causal} and that we  now describe in the notation of \cite{dominiqueOppenheim}. 
For a given integer $\vn \geq 0$ and for every $\ell = 0,\dots \vn-1$ fix 
\begin{enumerate}
    \item a set $T_\ell \subset \{0,...,\ell\}$ with $\ell \in T_\ell$
    \item a permutation $\sigma_\ell: T_\ell \to T_\ell$. 
\end{enumerate}
The product $\star:\Mnc{vn}\to \Mnc{vn}$ defined  by 
\[
(A \star B)_{jk} := \sum_{(p,q) \in T_j \times T_k} a_{p,q} b_{\sigma_j(p),\sigma_k(q)}
\] 
for $A, B \in \mathbb{C}^{\vn \times \vn}$ is a \df{permutation causal} product.

\begin{proposition}
\label{p:perm-caus}
    A permutation causal product $\star$ is \jcp and causal.  Moreover, the Choi-Kraus rank of 
    $\star$ is one.
\end{proposition}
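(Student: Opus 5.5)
The plan is to write down an explicit single Choi-Kraus coefficient $V\in\Mnmc{n^2}{n}$ and check that $\star(A,B)=V^*(A\otimes B)V$. Then the jcp property follows from item~\ref{it:CPPKraus} of Theorem~\ref{thm:CPPForms}, and causality follows from the definition once $V$ is seen to be upper triangular. Guided by the Schur and convolution examples, define $V$ on basis vectors by
\[
 V e_k = \sum_{p\in T_k} e_p\otimes e_{\sigma_k(p)}, \qquad k=0,\dots,n-1 .
\]

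The first step is to verify the representation entrywise:
\[
 \langle V^*(A\otimes B)Ve_k, e_j\rangle = \langle (A\otimes B) Ve_k, Ve_j\rangle = \sum_{p\in T_j}\sum_{q\in T_k} \langle Ae_q,e_p\rangle \langle Be_{\sigma_k(q)}, e_{\sigma_j(p)}\rangle = \sum_{(p,q)\in T_j\times T_k} a_{p,q}\, b_{\sigma_j(p),\sigma_k(q)},
\]
which is exactly $(A\star B)_{jk}$. So $\star(A,B)=V^*(A\otimes B)V$. The only care needed here is bookkeeping: keep the indices $(j,k)$ and the inner/outer sums straight, and check that the conjugate-linear slot lands on $Ve_j$. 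Since this is a single-term Choi-Kraus form, Theorem~\ref{thm:CPPForms} gives that $\star$ is jcp.

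Next comes causality. Because $T_k\subseteq\{0,\dots,k\}$ and $\sigma_k$ maps $T_k$ into $T_k$, every term $e_p\otimes e_{\sigma_k(p)}$ in $Ve_k$ has $p\le k$ and $\sigma_k(p)\le k$. Hence $Ve_k\in\spann\{e_i\otimes e_j: i,j\le k\}$, and by Lemma~\ref{lemma:UpperTriAltDef} $V$ is upper triangular, that is, both left and right upper triangular. So $\star$ is left causal and right causal, hence causal. (Equivalently, one could invoke Theorem~\ref{t:causal} and Corollary~\ref{c:causal}.)

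Finally, the Choi-Kraus rank. We have exhibited a representation with one term, so the rank is at most one. It is exactly one provided $\star\neq 0$, and this holds because $k\in T_k$ forces $Ve_k\ne0$. By Remark~\ref{r:CPPP-form-terminology} the rank equals $\Rank C_\star$, which is therefore $1$. I expect no real obstacle beyond the index check in the first step.
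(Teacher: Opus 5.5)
Your proposal is correct and follows the paper's proof: the same single coefficient $Ve_k=\sum_{p\in T_k} e_p\otimes e_{\sigma_k(p)}$, the entrywise verification that $A\star B=V^*(A\otimes B)V$ (hence \jcp by Theorem~\ref{thm:CPPForms}), and upper triangularity read off from $Ve_k\in\spann\{e_i\otimes e_j: i,j\le k\}$, which gives causality. Your version has two small improvements: your entrywise computation puts $Ve_k$ in the linear slot and so gets the $(j,k)$ entry directly, which is cleaner than the paper's display, and your observation that $k\in T_k$ forces $V\neq 0$ shows the Choi-Kraus rank is exactly one rather than at most one, a point the paper leaves implicit.
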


\begin{proof}
Given a permutation causal product $\star,$ let $V \in \mathbb{C}^{\vn^2 \times \vn}$ be the matrix with $\ell$-th column
\[
 Ve_\ell = \sum_{j \in T_\ell} e_j \otimes e_{\sigma_\ell(j)}.
\]
It is straightforward to check that $V$ and $\Pi_\vn V$ are both left
upper triangular and hence $V$ is upper triangular. Thus, to complete
the proof it suffices to show that 
\[
 A \star B = V^* (A\otimes B)V
\]
for all $A,B \in \mathbb{C}^{\vn \times \vn}$. To verify this
claim, observe
\begin{align*} 
(V^* (A \otimes B)V)_{j k} &= \big\langle \big(A \otimes B\big) \big(\sum_{p \in T_j} e_p \otimes e_{\sigma_j (p)}\big),\big(\sum_{q \in T_k} e_q \otimes e_{\sigma_k (q)}\big)\big\rangle \\
& = \sum_{p \in T_j} \sum_{q \in T_k} \langle A e_p \otimes B e_{\sigma_j (p)}, e_q \otimes e_{\sigma_k(q)}\rangle \\
&= \sum_{(p,q) \in T_j \times T_k} \langle Ae_p, e_q \rangle \langle B e_{\sigma_j(p)}, e_{\sigma_k(q)} \rangle\\
& = \sum_{(p,q) \in T_j \times T_k} a_{p,q} b_{\sigma_j(p),\sigma_k(q)} \\
&= (A \star B)_{j k}. \qedhere
\end{align*}
\end{proof}

\section{Commutativity, Associativity, and Units of \jcp products}
\label{sec:BasicAlgebra}

 In this section we study commutativity, associativity, and units of \jcp products in terms of their Choi-Kraus representations and Choi matrices.

\subsection{Commutativity}
\label{subsection:commutativity}
  Commutativity of a \jcp product $\Phi: \Mnc{n} \times \Mnc{n} \to \Mnc{m}$ has characterizations
  in terms of the canonical shuffle and both its Choi matrix  and the existence of a suitable
  Choi-Kraus form. See Proposition~\ref{prop:CommutativeChoiMat} and Theorem~\ref{thm:CommutativeChoiRep}, respectively.

 As usual, $\Pi_n : \mathbb{C}^n \otimes \mathbb{C}^n \mapsto \mathbb{C}^n\otimes \mathbb{C}^n$ denotes the canonical shuffle $\Pi_n(A\otimes B)\Pi_n= B\otimes A$. 

\begin{proposition}
\label{prop:CommutativeChoiMat}
     If  $\Phi: \Mnc{n} \times \Mnc{n} \to \Mnc{m}$ is  a \jcp product with Choi matrix $C_\Phi,$ then $\Phi$ is commutative if and only if $(\Pi_n \otimes I_m) C_\Phi (\Pi_n \otimes I_m) = C_\Phi.$ 
\end{proposition}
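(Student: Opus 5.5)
The proof amounts to bookkeeping with the tensor-product form of the Choi matrix. Write
\[
C_\Phi=\sum_{i,j,s,t} E_{i,j}\otimes E_{s,t}\otimes \Phi(E_{i,j},E_{s,t}),
\]
and note that conjugation by $\Pi_n\otimes I_m$ acts only on the first two tensor factors. Since $\Pi_n(X\otimes Y)\Pi_n = Y\otimes X$ and $\Pi_n=\Pi_n^*=\Pi_n^{-1}$, we get
\[
(\Pi_n\otimes I_m)C_\Phi(\Pi_n\otimes I_m)=\sum_{i,j,s,t} E_{s,t}\otimes E_{i,j}\otimes \Phi(E_{i,j},E_{s,t}).
\]
Relabeling the summation indices $(i,j)\leftrightarrow(s,t)$ turns this into
\[
\sum_{i,j,s,t}E_{i,j}\otimes E_{s,t}\otimes \Phi(E_{s,t},E_{i,j}),
\]
which is precisely $C_{\Phi_\Pi}$, the Choi matrix of the flipped product $\Phi_\Pi(A,B)=\Phi(B,A)$ introduced in the proof of Corollary~\ref{c:causal}.

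With this identity the equivalence follows in two steps. If $\Phi$ is commutative, then $\Phi(E_{s,t},E_{i,j})=\Phi(E_{i,j},E_{s,t})$ for all indices, so the two Choi matrices agree term by term. Conversely, suppose $(\Pi_n\otimes I_m)C_\Phi(\Pi_n\otimes I_m)=C_\Phi$. The elements $E_{i,j}\otimes E_{s,t}$ are linearly independent in $\Mnc{n}\otimes\Mnc{n}$. Equivalently, the $(i,j),(s,t)$ block of $C_\Phi$ in $M_n(M_n(M_m))$ is exactly $\Phi(E_{i,j},E_{s,t})$, and the blocks of a block matrix determine it uniquely. So comparing the $M_m$-coefficients gives $\Phi(E_{s,t},E_{i,j})=\Phi(E_{i,j},E_{s,t})$ for all $i,j,s,t$. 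Bilinearity then extends this to $\Phi(A,B)=\Phi(B,A)$ for all $A,B\in\Mnc{n}$, by expanding $A=\sum a_{ij}E_{i,j}$ and $B=\sum b_{st}E_{s,t}$.

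No step here is a real obstacle; the only care needed is index conventions. In Theorem~\ref{thm:CPPForms} the Choi matrix is written both as $((\Phi(E_{i,j},E_{s,t}))_{i,j})_{s,t}$ and as $\sum E_{i,j}\otimes E_{s,t}\otimes\Phi(E_{i,j},E_{s,t})$. I will work exclusively with the tensor-sum form, so the action of $\Pi_n\otimes I_m$ on the first two factors is transparent and the block-ordering convention never matters. The argument uses the \jcp hypothesis only to have $C_\Phi$ defined; positivity plays no role.
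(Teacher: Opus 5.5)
Your proof is correct and follows the same route as the paper. The paper simply asserts that $(\Pi_n\otimes I_m)C_\Phi(\Pi_n\otimes I_m)=C_\Phi$ is equivalent to $\Phi(E_{i,j},E_{s,t})=\Phi(E_{s,t},E_{i,j})$ for all indices and then invokes bilinearity; your conjugation-and-relabeling computation, followed by comparing coefficients of the linearly independent $E_{i,j}\otimes E_{s,t}$, supplies exactly the check the paper calls straightforward.
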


\begin{proof}
    It is straightforward to check that $(\Pi_n \otimes I_m) C_\Phi (\Pi_n \otimes I_m) = C_\Phi$ if and only if $\Phi(E_{ij},E_{s,t}) = \Phi(E_{s,t}, E_{i,j})$ for all $i,j,s,t$. From the bilinearity of $\Phi$, the latter condition holds if and only if $\Phi$ is commutative. 
\end{proof}

Because of choices involved in passing from a psd Choi matrix to
a Choi-Kraus representation, describing commutative products in terms
of Choi-Kraus coefficients is more subtle. 

\begin{theorem}
\label{thm:CommutativeChoiRep}
     A \jcp product $\Phi: \Mnc{n} \times \Mnc{n} \to \Mnc{m}$ is commutative if and only if  it possesses  a Choi-Kraus representation $\Phi(A,B) =\sum_{j} V_j^* (A \otimes B) V_j$ that satisfies $\Pi_n V_j = \pm V_j$ for each $j$.  If $C_\Phi$ has rank one and $\Phi$ is commutative, then any minimal Choi-Kraus representation $\Phi(A,B) = V^* (A \otimes B) V$ for $\Phi$ satisfies $\Pi_n V = \pm V$.
\end{theorem}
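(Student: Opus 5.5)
The plan is to work at the level of the Choi matrix and use Proposition~\ref{prop:CommutativeChoiMat}, which says commutativity is equivalent to $C_\Phi$ commuting with the unitary involution $U:=\Pi_n\otimes I_m$. The link between Choi-Kraus coefficients and the Choi matrix comes from the proof of Theorem~\ref{thm:CPPForms}. Given $V\in\Mnmc{n^2}{m}$, let $w\in\CC^{n^2}\otimes\CC^m$ be the vector with blocks $w(i,s)=(V^*(e_i\otimes e_s))$. Then $V^*(\cdot)V$ contributes $ww^*$ to the Choi matrix. Moreover, $\Pi_nV$ corresponds to the vector with blocks $w(s,i)$, that is, to $Uw$. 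Hence $\Pi_nV=\pm V$ if and only if $Uw=\pm w$.

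For the easy direction, suppose each $V_j$ satisfies $\Pi_nV_j=\epsilon_jV_j$ with $\epsilon_j=\pm1$. Since $\Pi_n^*=\Pi_n$ and $\Pi_n(A\otimes B)\Pi_n=B\otimes A$,
\[
V_j^*(B\otimes A)V_j=V_j^*\Pi_n(A\otimes B)\Pi_nV_j=\epsilon_j^2V_j^*(A\otimes B)V_j .
\]
Summing over $j$ gives $\Phi(B,A)=\Phi(A,B)$. This step does not even need the Choi matrix.

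For the converse, assume $\Phi$ is commutative. Then $UC_\Phi U=C_\Phi$, and since $U$ is a self-adjoint unitary, $C_\Phi$ commutes with $U$. Split $\CC^{n^2m}$ into the $\pm1$ eigenspaces of $U$. The psd matrix $C_\Phi$ commutes with $U$, so it leaves both eigenspaces invariant and is block diagonal with respect to the splitting. Diagonalize $C_\Phi$ on each eigenspace separately. This produces a spectral decomposition
\[
C_\Phi=\sum_k w_kw_k^*
\]
in which each $w_k$ is a scaled eigenvector lying in one of the two eigenspaces, so $Uw_k=\pm w_k$. Now run the construction from the proof of Theorem~\ref{thm:CPPForms} with these $w_k$. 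The resulting $V_k$ satisfy $\Pi_nV_k=\pm V_k$ by the correspondence above. The main point needing care is checking that correspondence: that $(\Pi_n V)^*(e_i\otimes e_s)=V^*(e_s\otimes e_i)$ matches the index swap performed by $U$ on the Choi vector. This is a direct computation.

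For the rank-one statement, suppose $C_\Phi$ has rank one and $\Phi(A,B)=V^*(A\otimes B)V$ is a minimal representation with associated vector $w$. Since the Choi matrix is determined by $\Phi$, $C_\Phi=ww^*$. Commutativity gives $(Uw)(Uw)^*=ww^*$, so $Uw=cw$ for some $|c|=1$. Because $U^2=I$, we get $c^2=1$, so $c=\pm1$. Hence $\Pi_nV=\pm V$.
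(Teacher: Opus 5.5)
Your proof is correct, but the nontrivial direction (commutative implies a representation with $\Pi_n V_j=\pm V_j$) takes a different route from the paper's.

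\textbf{The paper's route.} It first treats the case where $C_\Phi$ has simple nonzero spectrum. There every unit eigenvector is forced to satisfy $Uw=\pm w$, where $U=\Pi_n\otimes I_m$. It then builds an auxiliary commutative product $\Psi$ whose Choi matrix is positive definite with distinct eigenvalues. It perturbs to $\Phi+\epsilon\Psi$, which has simple spectrum for all but finitely many $\epsilon$. Finally it takes limits of subsequences of the resulting Choi-Kraus coefficients as $\epsilon\to 0$.

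\textbf{Your route.} You note that $C_\Phi$ commutes with the self-adjoint unitary $U$. You can therefore diagonalize $C_\Phi$ inside the $\pm1$ eigenspaces of $U$ directly, which is simultaneous diagonalization. This removes the need for the auxiliary product $\Psi$ and for the claim that the perturbed spectrum is generically simple. It also removes the limiting step, where one must check that the coefficients stay bounded (via $\sum_j \|w_j\|^2=\trace C_{\Phi_\epsilon}$) and fix the signs along a subsequence.

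\textbf{What your construction gives in addition.} It visibly produces a minimal representation: the Kraus vectors are orthogonal, and their number equals the rank of $C_\Phi$. You also verify the coefficient correspondence $(\Pi_nV)^*(e_i\otimes e_s)=V^*(e_s\otimes e_i)\leftrightarrow Uw$ explicitly. And you prove the easy converse, which the paper leaves implicit.

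\textbf{Rank-one part.} Your argument that $(Uw)(Uw)^*=ww^*$ together with $U^2=I$ forces $Uw=\pm w$ is essentially the paper's argument.

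\textbf{What the paper's approach emphasizes.} It highlights a rigidity phenomenon: when the spectrum is simple, every eigen-decomposition-derived representation is automatically symmetric or antisymmetric in each coefficient, not just a carefully chosen one. That is the same mechanism behind the rank-one uniqueness statement.
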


\begin{remark}
    It is straightforward to give an example of a Choi-Kraus representation of commutative product $\Phi(A,B) = \sum_{j} V_j^* (A \otimes B) V_j$ that does not satisfy $\Pi V_j = \pm V_j$ for each $j$. A simple example  where $\Phi:\Mnc{2}\times \Mnc{2}\to \CC$   is obtained by taking
    \[
    V_1^\top = \begin{pmatrix}
        0 & 1 & 0 & 0 \\
    \end{pmatrix} \qquad \mathrm{and} \qquad     V_2^\top = \begin{pmatrix}
        0 & 0 & 1 & 0 \\
    \end{pmatrix}.
    \]
    However, one has $\Pi_2 V_1 = V_2$ and $\Pi_2 V_2 = V_1$. 
    
     Given a Choi-Kraus representation $\Phi(A,B) = \sum V_j^* (A \otimes B) V_j$ of a commutative \jcp product, one may wonder if there is always a permutation $\sigma$ such that $\Pi_n V_j = \pm V_{\sigma(j)}$. The
      answer is no. Intuitively, if one expresses $C_\Phi = WW^*$ where the columns $w_j$ of $W$ are eigenvectors of $C_\Phi$, then the invariance of the eigenvectors under left multiplication by $(\Pi_n \otimes I_m)$ leads to a Choi-Kraus representation with the desired form. However, letting $U$ be an arbitrary unitary, we also have $C_\Phi = (WU) (WU)^*$, which leads to an alternative Choi-Kraus representation.  
      The columns of $WU$ need no longer be invariant under left multiplication by $(\Pi_n \otimes I_m).$ Hence generically the 
      coefficients in the resulting Choi-Kraus representation need not 
      respect the canonical shuffle $\Pi_n$. Based on
     this observation is not hard to construct a numerical example
     of a Choi-Kraus representation for a commutative product that fails
     to be invariant under $\Pi_n.$ An example is provided in a public Mathematica notebook, which can be found at \href{https://github.com/NCAlgebra/UserNCNotebooks/tree/master/EvertJuryMcCullough}{https://github.com/NCAlgebra/UserNCNotebooks}.
    \qed
\end{remark}

    The proof of Theorem~\ref{thm:CommutativeChoiRep} relies on two lemmas.

\begin{lemma}
    \label{lem:preCommutativeChoiRep}
       If  $C_\Phi$ has distinct (and) nonzero eigenvalues and $\Phi$ is commutative, then the conclusion of Theorem~\ref{thm:CommutativeChoiRep} holds.
\end{lemma}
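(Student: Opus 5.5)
The plan is to build the Choi-Kraus representation directly from a spectral decomposition of $C_\Phi$, as in the proof of Theorem~\ref{thm:CPPForms}, and to use the symmetry from Proposition~\ref{prop:CommutativeChoiMat}. Set $U:=\Pi_n\otimes I_m$, a self-adjoint unitary with $U^2=I$. By Proposition~\ref{prop:CommutativeChoiMat}, commutativity of $\Phi$ gives $UC_\Phi U=C_\Phi$, that is, $U$ commutes with $C_\Phi$.

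First, since $C_\Phi$ is psd, it has a spectral decomposition
\[
C_\Phi=\sum_k \lambda_k u_ku_k^*
\]
with orthonormal eigenvectors $u_k$. By hypothesis the eigenvalues $\lambda_k>0$ are distinct, so each eigenspace is one-dimensional. Because $U$ commutes with $C_\Phi$, it leaves each eigenspace invariant. Hence $Uu_k=c_ku_k$ for some scalar $c_k$, and $U^2=I$ together with $U=U^*$ forces $c_k=\pm1$. Set $w_k=\sqrt{\lambda_k}\,u_k$, so that $C_\Phi=\sum_k w_kw_k^*$ and $Uw_k=\pm w_k$.

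Next, form $V_k=\sum_{i,s}(e_i\otimes e_s)\,w_k(i,s)^*$ exactly as in the proof of Theorem~\ref{thm:CPPForms}; that proof gives $\Phi(A,B)=\sum_k V_k^*(A\otimes B)V_k$. It remains to translate $Uw_k=\pm w_k$ into $\Pi_nV_k=\pm V_k$. Writing out the identification $\mathbb{C}^{mn^2}\cong(\mathbb{C}^n\otimes\mathbb{C}^n)\otimes\mathbb{C}^m$, the condition $Uw_k=\pm w_k$ says $w_k(s,i)=\pm w_k(i,s)$ for all $i,s$. Therefore
\[
\Pi_nV_k=\sum_{i,s}(e_s\otimes e_i)\,w_k(i,s)^*=\sum_{i,s}(e_i\otimes e_s)\,w_k(s,i)^*=\pm V_k .
\]

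The only delicate point is this index bookkeeping. One must check that $\Pi_n\otimes I_m$ acting on $C_\Phi$, in the ordering used to define $C_\Phi$ (blocks indexed by $(i,j)$ inside $(s,t)$), really acts by swapping the pair $(i,s)$ in the coordinates of $w_k$ that are used to build $V_k$. Whatever block ordering is used, $\Pi_n$ simply interchanges the two $\mathbb{C}^n$ tensor factors, so the swap identity holds; the care needed is to confirm this is consistent with the convention in Theorem~\ref{thm:CPPForms}. Also, the eigenvalue $\lambda_k$ must be nonzero only so that the $w_k$ are nonzero coefficients; the zero-eigenvalue terms can simply be dropped.
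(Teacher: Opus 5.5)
Your argument for the existence half is correct and is the paper's argument. Commutativity gives $UC_\Phi U=C_\Phi$ with $U=\Pi_n\otimes I_m$. Each one-dimensional eigenspace belonging to a nonzero eigenvalue is then $U$-invariant, and $U^2=I$ forces $U$ to act on it by $\pm1$. Your index computation then correctly converts $Uw_k=\pm w_k$ into $\Pi_nV_k=\pm V_k$.

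What is missing is the second sentence of the conclusion of Theorem~\ref{thm:CommutativeChoiRep}: when $C_\Phi$ has rank one, \emph{every} minimal Choi--Kraus representation $\Phi(A,B)=V^*(A\otimes B)V$ satisfies $\Pi_nV=\pm V$. The lemma is meant to cover this. Its hypothesis is that the \emph{nonzero} eigenvalues are distinct, which is the reading you also adopt when you drop zero-eigenvalue terms. The paper's proof of the theorem invokes the lemma for exactly this rank-one case. Your proof only shows that the particular $V_k$ built from a spectral decomposition have the symmetry; it says nothing about an arbitrary $V$.

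The fix is short. Given any $V$ with $\Phi(A,B)=V^*(A\otimes B)V$, put $w(i,s):=V^*(e_i\otimes e_s)\in\CC^m$ and $w:=\sum_{i,s}e_i\otimes e_s\otimes w(i,s)$. Then $\Phi(E_{i,j},E_{s,t})=V^*(e_i\otimes e_s)(e_j\otimes e_t)^*V=w(i,s)\,w(j,t)^*$, so
\[
C_\Phi=\sum_{i,j,s,t}\bigl(e_i\otimes e_s\otimes w(i,s)\bigr)\bigl(e_j\otimes e_t\otimes w(j,t)\bigr)^*=ww^*.
\]
Hence $w$ spans the range of $C_\Phi$, which is the eigenspace of its single nonzero eigenvalue. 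That eigenspace is $U$-invariant, so $Uw=\pm w$. Since $V=\sum_{i,s}(e_i\otimes e_s)\,w(i,s)^*$, your own computation then gives $\Pi_nV=\pm V$. This is how the paper closes its proof.
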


\begin{proof}
     Suppose that $C_\Phi$ has distinct nonzero eigenvalues. Following the proof of Theorem \ref{thm:CPPForms}, a Choi-Kraus representation for $\Phi$ can be obtained by computing an eigendecomposition 
      of $C_\Phi$. That is, if  $(\lambda_j,w_j)$ is the $j$-th nonzero eigenpair of $C_\Phi$  with $w_j$
      a unit vector and  $V_j$ is
      the $n^2$ by $m$ matrix so that $\sqrt{\lambda_j} w_j = \vec(V_j^*)$, then
    \[
    \Phi(A,B) =\sum_{j} V_j^* (A \otimes B) V_j.
    \]
    Furthermore, using $(\Pi_n \otimes I_m) C_\Phi (\Pi_n \otimes I_m) = C_\Phi$, it is straightforward to check that $(\lambda,w)$ is an eigenpair of $C_\Phi$ if and only if $(\lambda, (\Pi_n \otimes I_m) w)$ is an eigenpair of $C_\Phi$. Since $C_\Phi$ is assumed to have distinct eigenvalues, it follows that for each $j$, there exists a $\theta_j \in [0,2\pi)$ such that
    \[
    (\Pi_n \otimes I_m) w_j = e^{i \theta_j} w_j.
    \]
    Left multiplying this equality by $\Pi_n \otimes I_m$ now yields
    \[
    w_j = (\Pi_n \otimes I_m)^2 w_j = e^{i \theta_j} (\Pi_n \otimes I_m) w_j = e^{2i \theta_j} w_j.
    \]
    Thus $\theta_j \in \{0,\pi\}$ for each $j$. It follows that $(\Pi_n \otimes I_m) w_j = \pm w_j.$
Since $w_j=\sum_{i,s} e_i\otimes e_s \otimes V_j^*(e_i\otimes e_s),$
\[
 \begin{split}
   \pm \sum_{i,s} e_i\otimes e_s \otimes V_j(e_i\otimes e_s)
    & = (\Pi_n \otimes I_m)\sum_{i,s} e_i\otimes e_s \otimes V_j (e_i\otimes e_s)
    \\ & = \sum_{i,s} e_i \otimes e_s\otimes V_j^*(e_s\otimes e_i).
 \end{split}
\]
 Hence $V^* (e_s\otimes e_i) = \pm V^* (e_i\otimes e_s)$ for each $i,s$ from which it follows that 
\[
 \Pi_n V_j = \Pi_n \sum (e_i\otimes e_s) (e_i\otimes e_s)^* V_j = \pm \sum (e_i\otimes e_s) (e_i\otimes e_s)^* V_j =\pm V_j.
\]

 Finally, if $C_\Phi$ has rank one and $V^* (A \otimes B) V$ is a Choi-Kraus representation of $\Phi$, then $w = \vec(V^*)$ must be an eigenvector of $C_\Phi$ corresponding to the nonzero eigenvalue, hence this case follows from the previous argument. 
\end{proof}

    \begin{lemma}
    \label{lem:CommChoiDistinctEig}
    For any $n,m \in \mathbb{N}$, there exists a commutative \jcp product $\Psi: \Mnc{n} \times \Mnc{n} \mapsto \Mnc{m}$ such that the Choi matrix $C_\Psi$ has full rank and such that all eigenvalues of $C_\Psi$ are distinct (and non-zero). 
    \end{lemma}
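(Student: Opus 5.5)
By Proposition~\ref{prop:CommutativeChoiMat} and Theorem~\ref{thm:CPPForms}, it suffices to produce a psd matrix $C \in \Mnc{n^2 m}$ with three properties: it is positive definite, it has $n^2 m$ distinct eigenvalues, and it satisfies $(\Pi_n\otimes I_m) C (\Pi_n \otimes I_m) = C$. Any such $C$ is the Choi matrix of a unique bilinear map $\Psi$, namely $\Psi(E_{i,j},E_{s,t}) := $ the corresponding $m\times m$ block of $C$, extended bilinearly. Theorem~\ref{thm:CPPForms} then makes $\Psi$ \jcp, and Proposition~\ref{prop:CommutativeChoiMat} makes it commutative. So the whole task is to build a suitable $C$ directly.

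To construct $C$, set $J := \Pi_n \otimes I_m$, a self-adjoint unitary with $J^2 = I$. Decompose $\CC^{n^2 m}$ into its $\pm 1$ eigenspaces, $H_+ = \operatorname{Sym}(\CC^n\otimes\CC^n)\otimes \CC^m$ and $H_- = \operatorname{Anti}(\CC^n\otimes\CC^n)\otimes\CC^m$. Choose an orthonormal basis $\{u_k\}$ of $\CC^{n^2 m}$ with each $u_k$ in $H_+$ or $H_-$. For instance, take the vectors $\tfrac{1}{\sqrt2}(e_i\otimes e_s \pm e_s\otimes e_i)\otimes f_r$ for $i<s$, together with $e_i\otimes e_i\otimes f_r$, where $\{f_r\}$ is the standard basis of $\CC^m$. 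Then pick distinct positive numbers $\lambda_1,\dots,\lambda_{n^2m}$ and set
\[
 C = \sum_k \lambda_k u_k u_k^*.
\]
Clearly $C$ is positive definite with distinct eigenvalues. Since $J u_k = \pm u_k$, we get $J u_k u_k^* J = u_k u_k^*$, and hence $JCJ = C$.

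The only point needing care is the identification of blocks. In Proposition~\ref{prop:CommutativeChoiMat} the Choi matrix is indexed as $\CC^n\otimes\CC^n\otimes\CC^m$. We must confirm that conjugation by $\Pi_n\otimes I_m$ there is exactly the swap $(i,j,s,t)\mapsto(s,t,i,j)$ of the arguments of $\Phi$. This is just the content of that proposition, which we may assume, so it only needs to be checked against the ordering convention fixed in Subsection~\ref{sec:guide}. The main (mild) obstacle is therefore bookkeeping rather than any real difficulty: we have to confirm that every positive definite matrix arises as a Choi matrix of some bilinear map. This holds because $\Phi\mapsto C_\Phi$ is a linear bijection between bilinear maps and $\Mnc{n^2m}$; the two spaces have equal dimension, and the map is injective since $\Phi$ is determined by its values on matrix units.
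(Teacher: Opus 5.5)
Your proposal is correct and follows the paper's proof essentially verbatim: the paper likewise takes the orthonormal basis of symmetric and antisymmetric tensors $\frac{1}{\sqrt2}(e_i\otimes e_j\pm e_j\otimes e_i)\otimes e_k$ together with $e_i\otimes e_i\otimes e_k$, forms $C=\sum_\ell \alpha_\ell f_\ell f_\ell^*$ with distinct positive $\alpha_\ell$, and invokes Proposition~\ref{prop:CommutativeChoiMat}. Your explicit check that $\Phi\mapsto C_\Phi$ is a linear bijection onto $\Mnc{n^2m}$ fills in the step the paper leaves implicit when it says ``letting $\Psi$ denote the \jcp product with Choi matrix $C$''.
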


    \begin{proof}
        By Proposition~\ref{prop:CommutativeChoiMat}, it is sufficient to produce a positive definite  matrix $C$, with distinct and non-zero eigenvalues, such that $(\Pi_n \otimes I_m) C (\Pi_n \otimes I_m) = C$. To this end consider the orthonormal basis
    \[
      F= \bigcup_{k=0}^{m-1} F_k,
    \]
    where 
        \[
        F_k = \{(e_i \otimes e_j \otimes e_k+e_j \otimes e_i \otimes e_k)/\sqrt{2}\}_{i<j} \cup \{(e_i \otimes e_j \otimes e_k-e_j \otimes e_i\otimes e_k)/{\sqrt{2}} \}_{i<j} \cup \{e_i \otimes e_i \otimes e_k\}_i.
        \]
        Let $F = \{f_\ell\}_{\ell=1}^{mn^2}$ denote a labelling of the basis elements, and let $\{\alpha_\ell\}_{\ell=1}^{mn^2} \subset \mathbb{N}$ be a list of distinct positive numbers. Thus the matrix
        \[
        C:= \sum_{\ell=1}^{mn^2}  \alpha_\ell f_\ell f_\ell^*
        \]
        is PD and has full rank $mn^2$. Furthermore, the $\alpha_\ell$ are precisely the eigenvalues of $C$, hence $C$ has distinct eigenvalues. Letting $\Psi$ denote the \jcp product with Choi matrix $C$ produces a \jcp product with the desired properties. 
\end{proof}

\begin{proof}[Proof of Theorem~\ref{thm:CommutativeChoiRep}]
    Let $\Psi$ be the product constructed in Lemma \ref{lem:CommChoiDistinctEig}. For $\epsilon \geq 0$, Define a matrix product $\Phi_\epsilon:\Mnc{n} \times \Mnc{n} \mapsto \Mnc{m}$ by $\Phi_\epsilon(A,B) = \Phi(A,B)+\epsilon \Psi(A,B)$. Then for all but finitely many $\epsilon$, the Choi Matrix $C_{\Phi_\epsilon} = C_{\Phi} +\epsilon C_\Psi$ has distinct eigenvalues, all of which are positive. Letting $\epsilon$ go to zero and taking limits of (subsequences) of the Choi-Kraus representations coming from Lemma~\ref{lem:preCommutativeChoiRep} yields a Choi-Kraus rep for $\Phi$ that has the desired form. 

    The case that $C_\Phi$ has rank one follows directly from Lemma~\ref{lem:preCommutativeChoiRep}. No approximation is required.
\end{proof}

\subsection{Associativity}

We now consider associativity for \jcp matrix products. As with commutativity, the situation is more straightforward for Choi-Kraus rank one \jcp products. 

\begin{proposition}
    If   $\Phi:\Mnc{n} \times \Mnc{n} \to \Mnc{n}$ is  a \jcp product with Choi-Kraus rank one representation
    \[
    \Phi(A,B) = V^* (A \otimes B) V,
    \]
    then $\Phi$ is associative if and only if 
    \[
    (I_n \otimes V)V = \omega (V \otimes I_n) V,
    \]
    for some unimodular constant $\omega.$
\end{proposition}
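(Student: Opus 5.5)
Both triple products of $\Phi$ can be written in a Choi-Kraus form whose single coefficient sits in $\Mnmc{n^3}{n}$. After that, associativity becomes the statement that two rank-one completely positive maps on $\Mnc{n}\otimes\Mnc{n}\otimes\Mnc{n}$ agree. Such maps agree exactly when their coefficients agree up to a unimodular scalar, since their Choi matrices are $ww^*$ and $w'w'^*$.

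\emph{Step 1 (triple products).} Using $(X\otimes C)(V\otimes I_n)=(XV)\otimes C$ gives
\[
\Phi(\Phi(A,B),C)=V^*\big(V^*(A\otimes B)V\otimes C\big)V=V^*(V\otimes I_n)^*(A\otimes B\otimes C)(V\otimes I_n)V .
\]
Similarly,
\[
\Phi(A,\Phi(B,C))=V^*(I_n\otimes V)^*(A\otimes B\otimes C)(I_n\otimes V)V .
\]
Set $W_1=(V\otimes I_n)V$ and $W_2=(I_n\otimes V)V$, both in $\Mnmc{n^3}{n}$. By trilinearity, and because elementary tensors span $\Mnc{n}\otimes\Mnc{n}\otimes\Mnc{n}=\Mnc{n^3}$, associativity of $\Phi$ is equivalent to
\[
W_1^*XW_1=W_2^*XW_2 \quad\text{for all } X\in\Mnc{n^3}.
\]

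\emph{Step 2 (sufficiency).} If $W_2=\omega W_1$ with $|\omega|=1$, then $W_2^*XW_2=|\omega|^2W_1^*XW_1=W_1^*XW_1$. Hence $\Phi$ is associative.

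\emph{Step 3 (necessity).} This is the only real step. Suppose $W_1^*XW_1=W_2^*XW_2$ for all $X$. Take $X=xy^*$ with $x,y\in\CC^{n^3}$, which gives
\[
W_1^*x\,(W_1^*y)^*=W_2^*x\,(W_2^*y)^* \quad\text{for all } x,y.
\]
Equivalently, apply the Choi matrix construction of Theorem~\ref{thm:CPPForms} to the linear map $X\mapsto W^*XW$. Its Choi matrix is $\vec(W^*)\vec(W^*)^*$, so equality of the two maps gives
\[
\vec(W_1^*)\vec(W_1^*)^*=\vec(W_2^*)\vec(W_2^*)^*.
\]
Two rank-one psd matrices $ww^*=w'w'^*$ force $w'=\omega w$ with $|\omega|=1$. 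To see this, note that if $w\neq0$, both vectors span the same range and have the same norm. If $w=0$, then $w'=0$ and any $\omega$ works. Therefore $W_2=\omega W_1$, which is the claimed identity $(I_n\otimes V)V=\omega(V\otimes I_n)V$.

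The main care point is the identification of the Choi matrix of $X\mapsto W^*XW$ as $\vec(W^*)\vec(W^*)^*$. This can be read off from the proof of Theorem~\ref{thm:CPPForms}, where $w_k$ is built from the rows of $V_k$. Alternatively it can be bypassed by the direct $xy^*$ argument: choose $y$ with $W_1^*y\neq0$, which exhibits every $W_2^*x$ as a fixed multiple of $W_1^*x$, and then the symmetric relation forces that multiple to be unimodular.
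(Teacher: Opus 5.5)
Your proof is correct and takes the same route as the paper. Both write the two triple products as Choi--Kraus forms with coefficients $(V\otimes I_n)V$ and $(I_n\otimes V)V$, use the fact that elementary tensors span $\Mnc{n^3}$ to reduce associativity to equality of two rank-one completely positive maps, and conclude from uniqueness of a rank-one coefficient up to a phase. Your version is slightly more complete than the paper's: you justify that uniqueness through the Choi matrix $\vec(W^*)\vec(W^*)^*$ and handle the zero case explicitly (strictly, $\vec(W_2^*)=\omega\vec(W_1^*)$ gives $W_2=\overline{\omega}W_1$, which is harmless since $\overline{\omega}$ is also unimodular).
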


\begin{proof}
  Suppose $\Phi(A,B)=V^*(A \otimes B)V$ is associative. Comparing
    \[
    \Phi(\Phi(A,B),C) = V^*(V^* \otimes I_n) (A \otimes B \otimes C) (V \otimes I_n) V,
    \] 
    and
    \[
    \Phi(A,\Phi(B,C)) = V^*(I_n \otimes V^*) (A \otimes B \otimes C) (I_n \otimes V) V,
    \]
    obtains
    \[
    V^*(V^* \otimes I_n) (A \otimes B \otimes C) (V \otimes I_n) V=V^*(I_n \otimes V^*) (A \otimes B \otimes C) (I_n \otimes V)V
    \]
    for all matrices $A,B,C \in \Mnc{n}$. 
    In particular, the  result is immediate if either $(I_n\otimes V)V$ or $(V\otimes I_n)V.$ Otherwise, 
    since tensor products of matrix units span $\Mnc{n^3}$, these Choi-Kraus forms induce the same Choi-Kraus rank one cp map $\Psi:\Mnc{n^3} \to \Mnc{n}$. Furthermore, since $\Psi$ has Choi-Kraus rank one, any two Choi-Kraus representations of $\Psi$ must be equal up to a phase (a unimodular complex number) $\omega,$ from which the forward direction follows. It is straightforward to check the reverse direction. 
\end{proof}

\begin{remark}
    Given any unimodular constant $\omega$, there exists an associative \jcp product $\Phi:M_n \times M_n \to M_n$ with Choi-Kraus representation $\Phi(A,B) = V^*(A \otimes B) V$ such that $(I_n \otimes V)V = \omega(V \otimes I_n)V$. A concrete example is obtained by letting $n=3$ and defining a linear map $V \in \mathbb{C}^{n^2 \times n}$  by $Ve_1 =0$ and $V e_2 = e_1 \otimes e_1$ and $V e_3 = e_1 \otimes e_2 + \overline{\omega} e_2 \otimes e_1$. This example was generated with the aid of ChatGPT and was verified in Mathematica. A public Mathematica notebook that verifies the example is available at \href{https://github.com/NCAlgebra/UserNCNotebooks/tree/master/EvertJuryMcCullough}{https://github.com/NCAlgebra/UserNCNotebooks}.
\end{remark}

\subsection{Units}

A \df{left unit} (resp. \df{right unit}) for a product $\Phi:\Mnc{n}\times \Mnc{n}\to \Mnc{n}$ is
a matrix $\unitL\in \Mnc{n}$  (resp. $\unitR$) 
satisfying $\Phi(\unitL,Y)=Y$ (resp. $\Phi(X,\unitR)=X$) for all $Y$ (resp. $X)$ in $\Mnc{n}.$  A \df{unit}
$\unitT$ is a matrix that is both a left and right unit.   Similarly a matrix $N$ is \df{left null} for $\Phi$
if $\Phi(N,Y)=0$ for all $Y.$ The notions of a \df{right null} and \df{null} matrix are defined similarly.

\begin{lemma}
\label{l:units}
  For a \jcp product $\Phi:\Mnc{n}\times \Mnc{n}\to \Mnc{n}$  the set of left (resp. right) null matrices is a self-adjoint subspace of $\Mnc{n}.$

  If $\Phi$ has a left (resp. right) unit, then $\Phi$ has no non-trivial right (resp. left) null matrices. In
  particular, if $\Phi$ has a unit, then $\Phi$ has no left or right null vectors and this unit is unique as a unit and as a left and right unit.
\end{lemma}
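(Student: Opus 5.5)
The plan is to prove the three assertions in turn, using a Choi-Kraus form $\Phi(A,B)=\sum_k V_k^*(A\otimes B)V_k$ from Theorem~\ref{thm:CPPForms}; by symmetry (pass to $\Phi_\Pi(A,B)=\Phi(B,A)$) it suffices to treat left null matrices and left units.

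\emph{Null matrices form a self-adjoint subspace.} The set $\mathcal N_L=\{N:\Phi(N,Y)=0 \ \forall Y\}$ is clearly a subspace by bilinearity. For self-adjointness, observe that a jcp product is Hermitian-preserving in the sense that $\Phi(N,Y)^*=\Phi(N^*,Y^*)$, which is immediate from the Choi-Kraus form since $(V_k^*(N\otimes Y)V_k)^*=V_k^*(N^*\otimes Y^*)V_k$. Thus if $\Phi(N,Y)=0$ for all $Y$, then $\Phi(N^*,Y^*)=0$ for all $Y$, and so $N^*\in\mathcal N_L$. The same argument applies to right null matrices.

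\emph{A left unit excludes nontrivial right null matrices.} Suppose $\unitL$ is a left unit and $N$ is right null, so $\Phi(X,N)=0$ for all $X$. Taking $X=\unitL$ gives $N=\Phi(\unitL,N)=0$. This step is essentially trivial. Symmetrically, a right unit rules out nontrivial left null matrices.

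\emph{Uniqueness of the unit.} If $\unitT$ is a unit, the previous step shows there are no nonzero left or right null matrices. Now let $\unitL'$ be any left unit. Then $\Phi(\unitL'-\unitT,Y)=Y-Y=0$ for all $Y$, so $\unitL'-\unitT$ is left null and hence zero. Likewise, any right unit equals $\unitT$. The classical argument $\unitL=\Phi(\unitL,\unitR)=\unitR$ also gives $\unitL=\unitR$ directly whenever both exist.

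The only point requiring any input from complete positivity is the self-adjointness, which rests on Hermiticity. The subtle thing to check is that Hermiticity really follows from jcp. It does, via item~\ref{it:CPPKraus} of Theorem~\ref{thm:CPPForms}, or alternatively because the Choi matrix $C_\Phi$ is self-adjoint, which is equivalent to $\Phi(E_{j,i},E_{t,s})=\Phi(E_{i,j},E_{s,t})^*$. Everything else is a one-line algebraic manipulation.
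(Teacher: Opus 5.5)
Your proof is correct and takes the same approach as the paper. Self-adjointness comes from the Hermiticity identity $\Phi(N^*,Y^*)=\Phi(N,Y)^*$, read off a Choi-Kraus form. The unit assertions are the one-line substitutions $N=\Phi(\unitL,N)=0$ and ``the difference of two left units is left null,'' which the paper simply calls evident.
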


\begin{proof}
    The set $\mathcal{N}$ of left null matrices is evidently a subspace of $\Mnc{n}.$ If $N\in \mathcal{N}$ and $B\in \Mnc{n},$
    then, by considering a Choi-Kraus representation for $\Phi,$
\[
 \Phi(N^*,B^*)^* = \Phi(N,B)=0.
\]
Hence $N^*\in\mathcal{N},$ and the first statement is proved. The second statement is evident.
\end{proof}

\begin{example}
 \label{eg:not-unique-psd-units}
  The product $\Phi:\Mnc{3}\times \Mnc{3}\to \Mnc{3}$ defined by $\Phi(X,Y)= (X_{0,0} + X_{1,1}) Y$ is \jcp with Choi-Kraus rank two determined by the Choi-Kraus operators $V_k:\CC^3\to \CC^3\otimes\CC^3$ defined by $V_k e_j=e_k \otimes e_j$  for $0 \leq k \leq 1$ and $0 \leq j \leq 2.$ Alternately,  $V_k = e_k \otimes I_3$   for $k=0,1.$ 
  The matrix 
\[
   \unitL = \begin{pmatrix} \frac12 &0&0 \\ 0 &\frac12 &0 \\ 0 &0&0\end{pmatrix}
\]
  is a left unit.   The subspace of left null matrices for $\Phi$ is the span $E_{0,0} -E_{1,1}$ together with the matrix units
  $E_{i,j}$  excluding $E_{0,0}$ and $E_{1,1}$.
In particular, $E_{0,0}$   and
\[
 \unitL^\prime = \frac12 \begin{pmatrix} 1& 1& 0 \\ 1&1&0\\0&0&0\end{pmatrix}
\]
 are rank one (psd) and also a left \matus.   
 \qed
\end{example}

\begin{example}
\label{eg:variant:not-unique}
    Consider the following variant $\Phi:\Mnc{2}\times \Mnc{2}\to \Mnc{2}$ of the Schur product map $S:\Mnc{2}\times \Mnc{2}\to \Mnc{2}$,
\[
 \Phi(A,B) = \begin{pmatrix} 2 A_{0,0}B_{0,0} & A_{0,1}B_{0,1} \\ A_{1,0} B_{1,0}  & 2 A_{1,1} B_{1,1} \end{pmatrix}.
\]
Thus 
\[
  \Phi(A,B) = S(A,B) +  \begin{pmatrix} A_{0,0} B_{0,0} &0\\0& A_{1,1}B_{1,1} \end{pmatrix}.
\]
   The Choi matrix for
$\Phi$ is
\[
 C_\Phi = C_S + E_{0,0}\otimes  \left ( E_{0,0}\otimes E_{0,0} \right ) \ +  \  E_{1,1}\otimes \left (E_{1,1}\otimes E_{1,1}\right ),
\]
 where $C_S$ is the Choi matrix for the Schur product $S.$ 
 Since this matrix is psd, $\Phi$ is a \jcp product.   Observe that
 \[
  \unitT =  \frac12 \begin{pmatrix} 1 & 2 \\ 2 & 1 \end{pmatrix} 
 \]
  is a unit (both left and right) for $\Phi.$  In particular, by Lemma~\ref{l:units},
  $\Phi$ is unique both as a left and as right unit.

   More generally, if  $P$ a psd matrix with non-zero entries,  then $\Phi:\Mnc{n}\times \Mnc{n}\to \Mnc{n}$ defined by
   $S(P,S(A,B))$ is a \jcp product with (two sided, unique) unit 
   \[
   1./P = \begin{pmatrix} 1/P_{i,j} \end{pmatrix}_{i,j=0}^{n-1}.
   \]
    The matrix $1./P$ is psd if and only if $P$ is rank one.
  \qed
\end{example}

\begin{lemma}
\label{l:LU}
  Suppose $\Phi:\Mnc{n}\times \Mnc{n} \to \Mnc{n}$  is \jcp.  
\begin{enumerate}[(i)]\itemsep=5pt
\item   \label{i:LU:i}
   If $\Phi$ has a left unit,  then it has a self-adjoint left unit.

\item \label{i:LU:iii}
  If $\Phi$ has a psd left unit, then $\Phi$ has a rank one psd left unit.  In fact, 
if $\Phi$ has a psd left unit of rank at least $2$, then $\Phi$ has infinitely many distinct
  rank one psd \matus.
\end{enumerate}
\end{lemma}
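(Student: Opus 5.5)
For item~\ref{i:LU:i}, I would use the symmetry $\Phi(X^*,Y^*)^*=\Phi(X,Y)$, which is immediate from any Choi-Kraus representation. Suppose $\unitL$ is a left unit, so $\Phi(\unitL,Y)=Y$ for all $Y$. Applying this with $Y^*$ and taking adjoints gives
\[
\Phi(\unitL^*,Y)=\Phi(\unitL,Y^*)^*=(Y^*)^*=Y ,
\]
so $\unitL^*$ is also a left unit. The set of left units is an affine set, being a translate of the subspace of left null matrices. Hence $\frac12(\unitL+\unitL^*)$ is a self-adjoint left unit.

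For item~\ref{i:LU:iii}, let $\unitL\succeq 0$ be a psd left unit of rank $r$. If $r=1$ there is nothing to prove, so assume $r\ge 2$. The key observation is the following: if $\unitL=\sum_k \lambda_k u_ku_k^*$ is any decomposition into rank one psd terms, then for $Y\succeq 0$ positivity preservation gives
\[
\Phi(u_ku_k^*,Y)\preceq \Phi(\unitL,Y)=Y \quad\text{(after scaling by $\lambda_k$)}.
\]
Thus the maps $\Psi_u(Y):=\Phi(uu^*,Y)$, for unit vectors $u$ in the range of $\unitL$, are completely positive maps. They are dominated by the identity map in the cp order, since $\mathrm{id}-\lambda_k\Psi_{u_k}=\sum_{j\ne k}\lambda_j\Psi_{u_j}$ is cp. The identity map on $\Mnc{n}$ has Choi-Kraus rank one (its Choi matrix is $\mathbf e\mathbf e^*$). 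By the Radon–Nikodym/rank-one argument, any cp map dominated by it is a nonnegative multiple of it: a psd matrix below a rank one psd matrix is a multiple of it. Hence $\Phi(uu^*,Y)=c(u)\,Y$ for every $u$ in the range of $\unitL$, because every such $u$ appears in some decomposition of $\unitL$ (take an orthonormal basis of the range containing $u/\|u\|$, then write $\unitL=\sum\mu_k v_kv_k^*$ via a suitable frame, or simply use $\unitL\succeq \epsilon uu^*$).

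Cleaner: for $u$ in the range of $\unitL$ there is $\epsilon>0$ with $\unitL-\epsilon uu^*\succeq 0$. Then $\mathrm{id}-\epsilon\Psi_u=\Phi(\unitL-\epsilon uu^*,\cdot)$ is cp by Theorem~\ref{thm:CPPForms}, giving the domination directly. The map $u\mapsto c(u)=\trace\Phi(uu^*,I)/n$ is a psd quadratic form $u^*Mu$ on the range $\mathcal R$ of $\unitL$. Because $\unitL=\sum\lambda_k u_ku_k^*$ with $\sum_k\lambda_k c(u_k)=1$, the form $M$ is not identically zero on $\mathcal R$. For any $u\in\mathcal R$ with $c(u)>0$, the matrix $uu^*/c(u)$ is a rank one psd left unit.

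For infinitely many distinct ones when $r\ge2$, note that $\{u\in\mathcal R: u^*Mu=0\}$ is a proper subspace of $\mathcal R$, namely the kernel of the psd form restricted to $\mathcal R$. Since $\dim\mathcal R\ge 2$, the complement of a proper subspace contains infinitely many pairwise non-parallel vectors $u$. These give pairwise distinct rank one projections $uu^*/c(u)$.

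The main obstacle is the domination step: justifying that a cp map $\Theta$ with $\mathrm{id}-\Theta$ cp must be a scalar multiple of the identity. I would do this through Choi matrices. Here $0\preceq C_\Theta\preceq C_{\mathrm{id}}=\mathbf e\mathbf e^*$, which forces $C_\Theta=c\,\mathbf e\mathbf e^*$ with $c\ge0$. It is also necessary to check that $\Psi_u$ is cp, which follows from the Choi-Kraus form $\Psi_u(Y)=\sum_k V_k^*(u\otimes I)\,Y\,(u^*\otimes I)V_k$.
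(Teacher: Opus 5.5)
Your proof is correct, and item (i) is exactly the paper's argument. For item (ii) you reach the same conclusion by a different mechanism.

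\textbf{The paper's route.} It fixes a Choi--Kraus representation $\Phi(X,Y)=\sum_\ell V_\ell^*(X\otimes Y)V_\ell$ and an orthogonal decomposition $\unitL=\sum_k v_kv_k^*$. It then observes that $\sum_{k,\ell}W_{k,\ell}^*YW_{k,\ell}$, with $W_{k,\ell}=(v_k\otimes I_n)^*V_\ell$, is a Choi--Kraus representation of the identity. Lemma~\ref{l:pre:LU} (every Kraus operator of the identity is a scalar) gives $W_{k,\ell}=c_{k,\ell}I_n$. By linearity, $(y_\alpha^*\otimes I_n)V_\ell=(c_{k,\ell}+\alpha c_{j,\ell})I_n$ along the line $y_\alpha=v_k+\alpha v_j$, and all but at most one $\alpha$ produce a rank one left unit.

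\textbf{Your route.} You work in the cp order instead. The bound $\unitL\succeq\epsilon uu^*$ for $u$ in the range makes both $\epsilon\Psi_u$ and $\mathrm{id}-\epsilon\Psi_u$ cp. Then $0\preceq C_{\epsilon\Psi_u}\preceq\mathbf e\mathbf e^*$ forces $\Psi_u$ to be a multiple of the identity.

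\textbf{Comparison.} The underlying rigidity fact is the same in both: the identity has Choi--Kraus rank one. You see it through Choi matrices, the paper through Kraus operators.

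Your version is basis-free and gives a slightly more complete picture. Every $u$ in the range $\mathcal R$ of $\unitL$ satisfies $\Phi(uu^*,\cdot)=c(u)\,\mathrm{id}$ for a psd Hermitian form $c$. So the rank one psd left units supported in $\mathcal R$ are exactly the $uu^*/c(u)$ with $u\in\mathcal R$ and $c(u)>0$. The ``infinitely many'' claim then reduces to the fact that a proper subspace of a space of dimension at least $2$ misses infinitely many lines.

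The paper's version produces explicit coefficients $c_{k,\ell}$ attached to a chosen Kraus representation. That is the form it reuses in the next theorem, which characterizes psd left units via $\sum_k\overline{\alpha}_kV_{j,k}=\lambda_jI_n$.

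\textbf{Two cosmetic points.} The matrices $uu^*/c(u)$ are rank one psd matrices, not projections in general. The aside about choosing an orthonormal basis containing $u/\|u\|$ can be dropped, since your $\unitL\succeq\epsilon uu^*$ argument already covers every $u\in\mathcal R$.
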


The proof of Lemma~\ref{l:LU} will take advantage of the following routine lemma.

\begin{lemma}
\label{l:pre:LU}
    If  $\sum W_k^* A W_k$ is a Choi-Kraus representation for the identity $\iota:\Mnc{n}\to \Mnc{n},$ then 
    each $W_k$ is a multiple of the identity.
\end{lemma}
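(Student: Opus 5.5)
The plan is to exploit the fact that the identity map $\iota$ has Choi-Kraus rank one, so any Choi-Kraus representation of it has coefficients that are scalar multiples of a single coefficient. Here the coefficients are $W_k \in \Mnc{n}$, since $\iota$ maps $\Mnc{n}$ to $\Mnc{n}$. We then identify that single coefficient as the identity.

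First compute the Choi matrix of $\iota$:
\[
C_\iota=\sum_{i,j} E_{i,j}\otimes E_{i,j}=\mathbf{e}\,\mathbf{e}^*,
\qquad \mathbf{e}=\sum_j e_j\otimes e_j .
\]
This is rank one. Next, follow the correspondence in the proof of Theorem~\ref{thm:CPPForms}, now for the one-variable map. A Choi-Kraus representation $\iota(A)=\sum_k W_k^*AW_k$ corresponds to a decomposition
\[
C_\iota=\sum_k w_kw_k^*,
\]
where $w_k$ is obtained from $W_k$ by vectorizing $W_k^*$. Concretely, the $e_i$-block of $w_k$ is $W_k^*e_i$, up to the ordering convention. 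The verification is the same computation as in that proof:
\[
\sum_k W_k^*E_{i,j}W_k=\sum_k (W_k^*e_i)(W_k^*e_j)^*,
\]
and this is exactly the $(i,j)$ block of $\sum_k w_kw_k^*$.

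Since $\sum_k w_kw_k^*=\mathbf{e}\mathbf{e}^*$ is rank one with range spanned by $\mathbf{e}$, each psd summand $w_kw_k^*$ is dominated by $\mathbf{e}\mathbf{e}^*$. Hence each $w_k$ lies in the range of $\mathbf{e}\mathbf{e}^*$, so $w_k=c_k\mathbf{e}$ for some scalar $c_k$. Unvectorizing gives $W_k^*e_i=c_ke_i$ for every $i$, that is, $W_k^*=c_kI$. Therefore $W_k=\overline{c_k}\,I$, which is a multiple of the identity. As a byproduct, $\sum_k|c_k|^2=1$.

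The only point needing care is the indexing: the map $W_k\mapsto w_k$ must be checked against the vectorization convention so that $\mathbf{e}$ corresponds to $I$ rather than to some transpose. This is harmless, because $I$ is symmetric and the argument only uses that $\mathbf{e}$ unvectorizes to $I$. An alternative that avoids vectorization entirely is to test the identity on $A=vv^*$ with $v\perp u$. This gives $\sum_k|\langle W_k u,v\rangle|^2=|\langle u,v\rangle|^2=0$, so every $u$ is an eigenvector of each $W_k$, and hence each $W_k$ is scalar. I would include this short argument, since it is the cleanest route.
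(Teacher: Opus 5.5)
Your proof is correct. Your closing alternative is essentially the paper's argument. The paper applies $\iota$ to $yy^*$ and reads off $yy^*=\sum_k (W_k^*y)(W_k^*y)^*$, so each $W_k^*y$ lies in $\operatorname{span}\{y\}$ by domination of a rank-one psd matrix. (You use an orthogonality test on $W_k$ in place of that domination step.) The paper then concludes because every vector is an eigenvector of $W_k^*$.

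Your primary route through $C_\iota=\mathbf{e}\mathbf{e}^*$ is genuinely different. It treats the lemma as an instance of a general fact: a cp map with rank-one Choi matrix has all its Kraus coefficients proportional to a single one. The paper invokes the same principle in its associativity proposition and in the rank-one case of Theorem~\ref{thm:CommutativeChoiRep}.

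Your route buys three things:
\begin{enumerate}
\item All the $W_k$ are handled by one decomposition.
\item The common scalar $c_k$ appears directly, so you skip the small extra step that a matrix for which every vector is an eigenvector is scalar.
\item You get the normalization $\sum_k|c_k|^2=1$ for free. The paper derives it separately later, by evaluating at $I_n$ in the proof of the psd-unit theorem.
\end{enumerate}

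The cost is setting up the one-variable version of the vectorization correspondence from Theorem~\ref{thm:CPPForms} and checking conventions, which you did correctly. The paper's route needs only positivity of $\iota$ on rank-one inputs and takes two lines.
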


\begin{proof}
    For each vector  $y\in \CC^n,$
\[
 yy^*    =\iota(yy^*) = \sum_k W_k^* yy^* W_k  = \sum_k z_k z_k^*,
\]
 where $z_k= W_k^* y.$ Hence for each $k$ each $y$ is an eigenvector of $W_k^*.$ The result follows.
\end{proof}

\begin{proof}[Proof of Lemma~~\ref{l:LU}]
 From a Choi-Kraus form for $\Phi$ (or just positivity), $\Phi(\unitL^*,Y)^* = \Phi(\unitL,Y^*)=Y^*.$
 Thus $\Phi(\unitL^*,Y)=Y$ and hence $\unitL^*$ is also a left unit. It follows that  $\frac12 (\unitL + \unitL^*)$ is a left unit which proves item~\ref{i:LU:i}.
 
 To prove item~\ref{i:LU:iii}, suppose $\Phi$ has a 
 psd left \matu $\unitL$ of rank $r \geq 2.$  Thus
 there exist orthogonal vectors $v_1,\dots,v_r\in \CC^n$ 
 such that $\unitL = \sum v_j v_j^*.$  There exist finitely many  $V_\ell:\CC^n\to \CC^n\otimes \CC^n$ such that
\[
 \Phi(X,Y) = \sum_\ell  V_\ell^* (X\otimes Y) V_\ell.
\]
Since 
\[
  Y= \iota(Y):= \Phi(\unitL,Y) =  \sum_{k,\ell} V_\ell^* (v_k\otimes I_n) Y  (v_k\otimes I_n)^* V_\ell
   =\sum_{k,\ell} W_{k,\ell}^* Y W_{k,\ell},
\]
 where $W_{k,\ell}= (v_k\otimes I_n)^* V_\ell,$ it follows from Lemma~\ref{l:pre:LU} that each $W_{k,\ell}=c_{k,\ell}I_n$
 for some scalar $c_{k,\ell}.$

 Choose a $k$ for which there is an $\ell$ such that $c_{k,\ell}\ne 0.$
 Let $\mu$ denote the reciprocal of the square root of  $\sum_{\ell} |c_{k,\ell}|^2.$ 
 It now follows, with $u_k= \mu v_k,$ that
\[
  \sum_{\ell} V_\ell^* (u_k u_k^* \otimes Y) V_\ell = 
   \mu^2 \, \sum_{\ell}   W_{k,\ell}^* Y W_{k,\ell} = Y. 
\]
Hence  $u_k u_k^*$ is a rank one psd left unit of $\Phi$.

Now, fix an index $j \neq k$ and let $\alpha \in \mathbb{R}$. Set $y_{\alpha} := v_k + \alpha v_j$. Observe that $(y_{\alpha}^* \otimes I_n) V_\ell = (c_{k,\ell}+\alpha c_{j,\ell}) I_n$ for all $\ell$. Furthermore, since not all $c_{k,\ell}$ are zero, there is at most one $\alpha$ for which the coefficients $c_{k,\ell}+\alpha c_{j,\ell}$ are zero for all $\ell$. We now have
\[
  \sum_{\ell} V_\ell^* (y_\alpha y_\alpha^* \otimes Y) V_\ell =  \sum_{\ell}   \overline{(c_{k,\ell}+\alpha c_{j,\ell})} Y (c_{k,\ell}+\alpha c_{j,\ell}) =  \sum_{\ell}   |c_{k,\ell}+\alpha c_{j,\ell}|^2 Y. 
\]
Thus, letting $\mu_\alpha$ be the reciprocal of $\sum_{\ell}   |c_{k,\ell}+\alpha c_{j,\ell}|^2$ for $\alpha$ such that $\sum_{\ell}   |c_{k,\ell}+\alpha c_{j,\ell}|^2 \neq 0$ gives that
\[
\{ \mu_\alpha y_\alpha y_\alpha^* : \ \sum_{\ell}   |c_{k,\ell}+\alpha c_{j,\ell}|^2 \neq 0\}
\]
is an infinite collection of distinct rank one psd units. 
\end{proof}

We now show that the existence of a left psd unit  for a \jcp product can be determined by examining a Choi-Kraus decomposition.

\begin{theorem}
    Let $\Phi(A,B) = \sum_{j=1}^\ell V_j^*(A \otimes B) V_j$ and for each $j$ write 
    \[
    V_j^* = [V_{j,0}^*\  \dots \  V_{j,n-1}^*] = \sum_{k=0}^{n-1} e_k^* \otimes V_{jk}^*,
    \]
    where $V_{jk} \in \Mnc{n}$ for all $j,k$. The map  $\Phi$ has a left psd unit if and only if there exists a vector $\alpha \in \mathbb{C}^n$  and unit vector $\lambda\in \CC^\ell,$ such that
       \begin{equation}
       \label{eq:psdunitEquation}
\sum_{k=0}^{n-1} \overline{\alpha}_k V_{j,k} = \lambda_j I_n, \quad j=1,\dots,\ell.
    \end{equation}
        In this case, $\unitL = \alpha \alpha^*$ is a rank one left psd unit. Furthermore, $\unitL$ is a unique psd unit if and only if every solution of equation \eqref{eq:psdunitEquation} has the form $\omega \alpha$ for some unimodular $\omega \in \mathbb{C}$.
\end{theorem}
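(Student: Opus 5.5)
The plan is to reduce everything to the computation already used in the proof of Lemma~\ref{l:LU}, namely the behavior of $V_j$ under $\alpha^*\otimes I_n$. First I will verify the identity
\[
 (\alpha^*\otimes I_n)V_j = \sum_{k=0}^{n-1} \overline{\alpha}_k V_{j,k},
\]
which follows from $V_j=\sum_k e_k\otimes V_{j,k}$, the adjoint of the stated block decomposition of $V_j^*$. Then, for the rank one matrix $\alpha\alpha^*$,
\[
 \Phi(\alpha\alpha^*,Y) = \sum_j V_j^*(\alpha\otimes I_n)\,Y\,(\alpha^*\otimes I_n)V_j = \sum_j W_j^* Y W_j,\qquad W_j:=\sum_k \overline{\alpha}_k V_{j,k}.
\]

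For the reverse direction, suppose \eqref{eq:psdunitEquation} holds with $\lambda$ a unit vector. Then $\Phi(\alpha\alpha^*,Y)=\sum_j|\lambda_j|^2Y=Y$, so $\unitL=\alpha\alpha^*$ is a psd left unit.

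For the forward direction, suppose $\Phi$ has a psd left unit. By Lemma~\ref{l:LU}\ref{i:LU:iii} it then has a rank one psd left unit $\alpha\alpha^*$. The display above shows that $\sum_j W_j^*YW_j$ is a Choi-Kraus representation of the identity map, so Lemma~\ref{l:pre:LU} gives $W_j=\lambda_jI_n$ for some scalars $\lambda_j$. Evaluating at $Y=I$ yields $\sum_j|\lambda_j|^2=1$, so $\lambda$ is a unit vector. This argument also proves the claim that $\unitL=\alpha\alpha^*$ is a rank one left psd unit.

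The step I expect to need the most care is the uniqueness statement, which I read as uniqueness among psd left units.
\begin{itemize}
\item Suppose $\unitL=\alpha\alpha^*$ is the only psd left unit, and let $\beta$ (with some unit vector $\lambda'$) be another solution of \eqref{eq:psdunitEquation}. By the reverse direction, $\beta\beta^*$ is a psd left unit, so $\beta\beta^*=\alpha\alpha^*$. This forces $\beta=\omega\alpha$ with $|\omega|=1$.
\item Conversely, suppose every solution has the form $\omega\alpha$, and let $\unitL'$ be any psd left unit. If $\Rank\unitL'\ge2$, Lemma~\ref{l:LU}\ref{i:LU:iii} produces infinitely many distinct rank one psd left units. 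Each of these is $\beta\beta^*$ with $\beta$ a solution by the forward argument, hence equals $\alpha\alpha^*$, a contradiction. So $\unitL'$ has rank one, $\unitL'=\beta\beta^*$ with $\beta$ a solution, and therefore $\unitL'=\alpha\alpha^*$.
\end{itemize}
The remaining detail is the rank one case, where $\unitL'=\beta\beta^*$ must come from a solution of \eqref{eq:psdunitEquation}. This is exactly the forward argument applied to $\beta$, which also supplies the unit vector $\lambda'$.
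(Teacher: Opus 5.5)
Your proof is correct and follows essentially the same route as the paper. Both arguments use the identity $(\alpha^*\otimes I_n)V_j=\sum_k\overline{\alpha}_kV_{j,k}$ together with Lemma~\ref{l:LU} to reduce to a rank one psd left unit. Both then apply Lemma~\ref{l:pre:LU} to get $W_j=\lambda_jI_n$, evaluate at $Y=I$ to obtain $\sum_j|\lambda_j|^2=1$, and use the rank-at-least-two case of Lemma~\ref{l:LU} for uniqueness. Your handling of the uniqueness clause is somewhat more explicit than the paper's brief case split, since you apply the forward argument to every rank one psd left unit.
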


\begin{proof}
    It is straightforward to verify that if $\alpha \in  \mathbb{C}^n$ is a solution of equation \eqref{eq:psdunitEquation}, then $\alpha \alpha^*$ is a rank one left psd unit of $\Phi$. It follows that if this equation has two solutions that are not unimodular multiples of each other, then $\Phi$ has multiple left psd units. 

    Now suppose that $\Phi$ has a psd unit. By Lemma \ref{l:LU}, it follows that $\Phi$ has a rank one psd unit $\unitL$. Choose $\alpha \in \mathbb{C}^n$ such that $\alpha \alpha^* = \unitL$. Since $\iota(Y)= \Phi(\unitL,Y)$ is the identity and 
\[
\begin{split}
  \iota(Y) & = \sum_j V_j^* (\alpha\alpha^*\otimes Y) V_j 
  \\ &= \sum_{j,k}  (e_k^*\otimes V_{jk}^*) (\alpha\alpha^* \otimes Y)
  (e_k\otimes V_{j,k}) = \sum_j  (\sum_k \alpha_k V_{jk}^*)  Y  (\sum_k \overline{\alpha_k} V_{jk}), 
\end{split}
\]
 Lemma~\ref{l:pre:LU} implies  each $W_j =\sum_k \overline{\alpha_k} V_{jk}$ is a multiple, say $\lambda_j,$ of the identity.
  In particular, 
    \[
    \iota(I_n) = \sum_{j=1}^\ell (\lambda_j^* I_n) (I_n) (\lambda_j I_n) = \sum_{j=1}^\ell |\lambda_j|^2 I_n = I_n,
    \]
    from which we conclude that $\sum_{j=1}^\ell |\lambda_j|^2 = 1$.
    
    To complete the proof, notice that if $\Phi$ has a psd left unit with rank two or more, then by Lemma~\ref{l:LU} item~\ref{i:LU:iii},
    $\Phi$ has at least two distinct rank one psd units. On the other hand, if $\Phi$ has more than one psd unit, then either 
    they are both rank one, or one has rank at least two.
\end{proof}

\begin{remark}
 Right units can be understood  analogously by studying the matrices $\{W_j\}_{j=1}^\ell := \{\Pi_n V_j\}_{j=1}^\ell$. In particular, $\Phi(A,B) = \sum_{j=1}^\ell V_j^*(A\otimes B)V_j$ has a right unit $\unitR$ if and only if there exists an $\alpha \in \mathbb{C}^n$ such that 
 \[
\sum_{k=0}^{n-1} \overline{\alpha}_k W_{j,k} = \lambda_j I_n \ \mathrm{for \ all\ } j=1,\dots,\ell \qquad
    \mathrm{and} \qquad  \sum_{j=1}^\ell |\lambda_j|^2 = 1.
 \]
 Here $W_j^* = [W_{j,0}^* \ \dots \ W_{j,n-1}^*]$ with $W_{j,k} \in \Mnc{n}$ for each $j,k$. 
 \qed
\end{remark}

Additionally, we show that finding left and right units is equivalent to solving a system of block matrix equations determined by the Choi matrix.

\begin{proposition}
    Let $\Phi:\Mnc{n} \times \Mnc{n} \to \Mnc{n}$ be a \jcp product and let $C_\Phi = ((\Phi(E_{i,j},E_{s,t})_{i,j=0}^{n-1}))_{s,t=0}^{n-1}$ be the Choi matrix of $\Phi$. Then $\unitL$ is a left unit of $\Phi$ if and only if $\unitL$ is a solution to the system of linear equations
    \[
    \sum_{i,j} \unitL_{i,j}\Phi(E_{i,j},E_{k,\ell}) = E_{k,\ell} \quad \mathrm{for \ all\ } k,\ell. 
    \]
    Similarly,  $\unitR$ is a right unit of $\Phi$ if and only if $\unitR$ is a solution to the system of linear equations
    \[
    \sum_{k,\ell} \unitR_{k,\ell}\Phi(E_{i,j},E_{k,\ell}) = E_{i,j} \quad \mathrm{for \ all\ } i,j. 
    \]
\end{proposition}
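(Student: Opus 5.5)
The plan is to expand the defining identity of a left unit using bilinearity of $\Phi$ and the fact that the matrix units $\{E_{k,\ell}\}$ form a basis of $\Mnc{n}$. No positivity or Choi-Kraus structure is needed; the statement is purely linear-algebraic. The Choi matrix enters only because its blocks are exactly the matrices $\Phi(E_{i,j},E_{k,\ell})$ that appear as coefficients in the system.

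First, write $\unitL = \sum_{i,j} \unitL_{i,j} E_{i,j}$. For any $Y = \sum_{k,\ell} Y_{k,\ell} E_{k,\ell}$, bilinearity gives
\[
\Phi(\unitL, Y) = \sum_{k,\ell} Y_{k,\ell} \sum_{i,j} \unitL_{i,j}\, \Phi(E_{i,j},E_{k,\ell}).
\]
If the displayed system holds for all $k,\ell$, the right side becomes $\sum_{k,\ell} Y_{k,\ell} E_{k,\ell} = Y$, so $\unitL$ is a left unit. Conversely, if $\unitL$ is a left unit, specializing to $Y = E_{k,\ell}$ gives $\Phi(\unitL, E_{k,\ell}) = E_{k,\ell}$. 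Expanding the left side by linearity in the first argument yields exactly the $(k,\ell)$ equation of the system.

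The right-unit statement follows by the same argument with the roles of the arguments exchanged. Write $\unitR = \sum_{k,\ell} \unitR_{k,\ell} E_{k,\ell}$, test against $X = E_{i,j}$, and use linearity in the second argument. Equivalently, one could apply the left-unit case to $\Phi_\Pi(A,B) = \Phi(B,A)$, as in the proof of Corollary~\ref{c:causal}.

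I do not expect a genuine obstacle. The only point needing care is the indexing: the system is linear in the $n^2$ unknowns $\unitL_{i,j}$, and its coefficient matrices $\Phi(E_{i,j},E_{k,\ell})$ are the $n\times n$ blocks of $C_\Phi$. One should note explicitly that the blocks are indexed by $(i,j)$ in the inner position and $(s,t)=(k,\ell)$ in the outer position, so the left-unit system is read off along the inner block index and the right-unit system along the outer one.
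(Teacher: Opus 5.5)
Your argument is correct and is essentially the paper's proof. The paper phrases it as saying that $B\mapsto\Phi(\unitL,B)$ is the identity exactly when its Choi matrix equals $(E_{k,\ell})_{k,\ell}$, whose $(k,\ell)$ block is $\sum_{i,j}\unitL_{i,j}\Phi(E_{i,j},E_{k,\ell})$; this is your evaluation on matrix units in Choi-matrix language, and you also spell out the converse and the right-unit case, which the paper leaves implicit.
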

\begin{proof}
    Suppose $\unitL$ is a left unit of $\Phi$. Then ucp map $\Psi_\unitL:\Mnc{n} \to \Mnc{n}$ defined by $\Psi_\unitL(B) = \Phi(\unitL,B)$ is the identity map and therefore has Choi matrix $C_{\Psi_\unitL} = (E_{k,\ell})_{k,\ell}$. The proof is completed by observing that that  $\sum_{i,j} \unitL_{i,j}\Phi(E_{i,j},E_{k,\ell}) $ is precisely $k,\ell$ block of the Choi matrix $C_{\Psi_\unitL}$.
\end{proof}

\begin{question}
    What is the maximum Choi-Kraus rank of a \jcp product with a left unit? What is the maximum Choi-Kraus rank of a \jcp product with a unit?
\end{question}

\section{Separable \jcp products}
\label{sec:separable}
 Separable Choi matrices play an important role in quantum information theory. For example, they are important in the theory of entanglement breaking quantum channels; see, e.g., \cite{HSR,Watrous}. Letting $\FF$ denote $\RR$ or $\CC$, we say that a \jcp product $\Phi:\Mnf{n}\times \Mnf{n} \to \Mnf{m}$ is \df{separable over} $\mathbb{F}$ if its Choi matrix $C_\Phi$ decomposes as $C_\Phi = \sum_{j=1}^\ell w_j w_j^* \otimes v_j v_j^*$ with $w_j \in \mathbb{F}^{n^2}$ and $v_j \in \mathbb{F}^m$. If $\ell$ is as small as possible, we say that $C_\Phi$ has \df{separable rank} $\ell$ over $\mathbb{F}$.

 The following proposition interprets the usual characterization of an entanglement breaking map in terms of a Choi-Kraus
 representation in the context of \jcp products.
 
\begin{proposition}
\label{prop:SeparableKrausForm}
     A \jcp product  $\Phi: \Mnf{n} \times \Mnf{n}\to \Mnf{m}$ is separable over $\mathbb{F}$ if and only if $\Phi$ has a Choi-Kraus decomposition $\Phi(A,B) = \sum V_j^* (A \otimes B) V_j$ where each $V_j \in \mathbb{F}^{n^2 \times m}$ has rank one. Moreover, $V_j = v_j w_j^*$ for $v_j\in \FF^{n^2}$ and $w_j\in \FF^{m}$ if and only if 
      there exist $u_j\in \FF^{n^2}$ such that 
\[   
    C_\Phi = \sum u_j u_j^* \otimes w_j w_j^*.
\]
Furthermore, in this case, 
\begin{equation}
\label{eq:SeparableKrausForm} 
 \Phi(A,B)= \sum_j \left [ v_j^*(A\otimes B)v_j  \right ]  w_jw_j^*.
\end{equation}
\end{proposition}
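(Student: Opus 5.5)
The plan is to use the dictionary between Choi-Kraus coefficients and rank-one decompositions of $C_\Phi$ from the proof of Theorem~\ref{thm:CPPForms}, checked in both directions, and then to specialize it to rank-one coefficients.

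\textbf{Step 1: the general dictionary.} Given any matrix $V\in \FF^{n^2\times m}$, set
\[
 x_V:=\sum_{i,s}(e_i\otimes e_s)\otimes V^*(e_i\otimes e_s)\in \FF^{n^2m}.
\]
The computation $V^*(E_{i,j}\otimes E_{s,t})V = V^*(e_i\otimes e_s)\,\big(V^*(e_j\otimes e_t)\big)^*$ from the proof of Theorem~\ref{thm:CPPForms} shows two things. First, for any finite family $\{V_k\}$, the bilinear map $\sum_k V_k^*(A\otimes B)V_k$ has Choi matrix $\sum_k x_{V_k}x_{V_k}^*$. Second, by the construction in that proof, every decomposition $C_\Phi=\sum_k x_kx_k^*$ yields coefficients $V_k$ with $x_{V_k}=x_k$. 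Hence Choi-Kraus representations of $\Phi$ correspond exactly to decompositions of $C_\Phi$ into sums of rank-one psd terms $xx^*$. I will state this as the basic observation. Over $\FF=\RR$, everything stays real because $V\mapsto x_V$ involves only adjoints, i.e.\ transposes.

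\textbf{Step 2: rank-one coefficients give separable terms.} Let $V=vw^*$ with $v\in\FF^{n^2}$ and $w\in \FF^m$. Then $V^*(e_i\otimes e_s)=w\,v^*(e_i\otimes e_s)=\overline{v_{is}}\,w$, so $x_V=\bar v\otimes w$, where $\bar v$ is the entrywise conjugate. Consequently
\[
 x_Vx_V^*=\bar v\bar v^*\otimes ww^*.
\]
Applying this to each term, a Choi-Kraus decomposition with $V_j=v_jw_j^*$ gives $C_\Phi=\sum_j u_ju_j^*\otimes w_jw_j^*$ with $u_j=\bar v_j\in\FF^{n^2}$.

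\textbf{Step 3: separable terms give rank-one coefficients.} Conversely, suppose $C_\Phi=\sum_j u_ju_j^*\otimes w_jw_j^*=\sum_j (u_j\otimes w_j)(u_j\otimes w_j)^*$. Setting $V_j:=\bar u_j w_j^*$, Step 2 gives $x_{V_j}=u_j\otimes w_j$. By Step 1, the $V_j$ then form a Choi-Kraus representation of $\Phi$, and each $V_j$ has rank at most one. Any zero terms can be discarded without changing either sum.

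Steps 2 and 3 together prove the equivalence, together with the precise "moreover'' correspondence $u_j=\bar v_j$ for the same $w_j$. Formula~\eqref{eq:SeparableKrausForm} then follows directly, since $V_j^*(A\otimes B)V_j = w_j\,\big[v_j^*(A\otimes B)v_j\big]\,w_j^*$ and the bracket is a scalar.

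\textbf{Main obstacle.} The only delicate point is bookkeeping. This includes the complex conjugate in $u_j=\bar v_j$, which is invisible when $\FF=\RR$. It also includes the fact that the letter $w_j$ plays different roles in the definition of separability and in the statement of the proposition. I will fix the convention of Step 1 explicitly so that the correspondence is unambiguous.
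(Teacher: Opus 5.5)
Your proof is correct and follows essentially the same route as the paper: the key computation is that a rank-one coefficient $V=vw^*$ contributes the separable term $\bar v\bar v^*\otimes ww^*$ to $C_\Phi$ (the paper's $u=\sum \overline{v_{a,b}}\,e_a\otimes e_b$), and the proof reads this identity in both directions. Your packaging through the vectors $x_V$ and the dictionary from the proof of Theorem~\ref{thm:CPPForms} makes explicit the ``routine argument'' the paper leaves to the reader: a separable decomposition of $C_\Phi$ splits $\Phi$ into a sum of products, each having a single rank-one coefficient.
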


\begin{remark}\rm
\pushQED{\qed}
  The $u_j$ can be obtained from the $v_j$ as follows. Fix an 
  orthonormal basis $\{e_1,\dots,e_n\}$ for $\FF^n.$ Decomposing the vector
  $v_j\in \FF^{n^2}=\FF^n \otimes \FF^n$  as
\[
 v_j = \sum_{a,b=0}^{n-1} v_{j,a,b}\,  e_a\otimes e_b,
\]
 the vector $u_j$ is given by 
\[
 u_j = \sum_{a,b=0}^{n-1} \overline{v_{j,a,b}} \, e_a\otimes e_b. \qedhere
\]
\popQED
\end{remark}

\begin{proof}[Proof of Proposition~\ref{prop:SeparableKrausForm}]
   Given $v\in \FF^{n^2}=\FF^n\otimes \FF^n,$ write
\[
 v = \sum_{a,b=0}^{n-1} v_{a,b} \, e_a\otimes e_b,
\]
 with respect to a fixed orthonormal basis $\{e_0,\dots,e_{n-1}\}$ of $\FF^n.$
 Given $w\in\FF^m$ and letting $V=vw^*,$
\[
  V^* (A\otimes B) V = \left [v^* (A\otimes B)v \right ] \,  ww^*
\]
for $A,B\in \Mnf{n}.$  Letting $\Phi(A,B)=V^* (A\otimes B)V$ it
follows that the Choi matrix for $\Phi$ is, by definition,  
\[
\begin{split}
 C_\Phi & = \sum (E_{ij} \otimes E_{st}) \otimes \Phi(E_{ij},E_{st})
 \\[3pt] & = \left [\sum  v^* \left (\sum E_{ij} \otimes E_{st} \right )v \,  (E_{ij} \otimes E_{st}) \right ] \otimes  ww^*
 \\[3pt] & =  \left [ \sum  v^*(e_i\otimes e_s)(e_j^* \otimes e_t^* )v \, (E_{ij} \otimes E_{st})  \right ] \otimes ww^*
 \\[3pt] & = \left [ \sum  \overline{v_{i,s}} v_{j,t}(E_{ij} \otimes E_{st})  \right ]\otimes  ww^* 
 \\[3pt] & = \left [ \sum \overline{v_{i,s}} v_{j,t}(e_i\otimes e_s)(e_j \otimes e_t)^*  \right ]\otimes  ww^* 
 \\[3pt] & =  \left [ (\sum \overline{v_{i,s}} \,  e_i \otimes e_s ) \, (\sum \overline{v_{j,t}} \,  e_j\otimes e_t)^* 
 \right ]\otimes ww^*
 \\[3pt] & = uu^* \otimes ww^*,
\end{split}
\]
where  $u=\sum \overline{v_{a,b}} e_a \otimes e_b.$

Assuming the \jcp product $\Phi:\Mnf{n}\times \Mnf{n}\to \Mnf{m}$ has the form 
$\Phi(A,B) =\sum V_j^* (A\otimes B)V_j$ for rank one $V_j$ and setting
$\Psi_j= V_j^*(A\otimes B)V_j,$ it follows that
\[
 \Phi =\sum \Psi_j.
\]
 Since $V_j$ is rank one, there exist $v_j\in \FF^n\otimes\FF^n$ and $w_j\in \FF^m$ such that
 $V_j=v_j w_j^*.$ From what has already been proved, with 
\[
 v_j =\sum_{a,b=0}^{n-1} v_{j,a,b} \, e_a\otimes e_b,
\]
 setting 
\[
   u_j =\sum_{a,b=0}^{n-1} \overline{v_{j,a,b}}\,  e_a\otimes e_b,
\]
 gives
\[
 C_{\Psi_j} = u_j u_j^* \otimes w_j w_j^*.
\]
 Thus $C_\Phi =\sum C_{\Psi_j}$ has the desired form.

To prove the converse, suppose $\Psi:\Mnf{n}\times \Mnf{n}\to \Mnf{m}$ is
a \jcp product with Choi matrix
\[
 C_\Psi = uu^* \otimes ww^*  
\]
 for some   $u\in \FF^n\otimes \FF^n$ and $w\in \FF^m.$  Thus
\[
\begin{split}
 \Psi(E_{ij},E_{st}) &  = \left [(e_i \otimes e_s)^* uu^* (e_j\otimes e_t) \right ] \, \otimes ww^*
 \\[3pt] & =  u_{i,s} \overline{u_{j,t}} \, ww^*  = \overline{v_{i,s}} v_{j,t} \, ww^*
 \\[3pt] & =  \left[v^* (E_{ij}\otimes E_{st}) v\right ] \, ww^*.
 \end{split}
\]
 Hence, by bilinearity, 
 \[  
 \Psi(A,B)= \left [ v^*(A\otimes B)v  \right ] \, \otimes ww^* =  V^* (A\otimes B)V,
 \]
  where $V$ is the rank one matrix $V=v w^*.$ 
 From here a routine argument completes the proof.
\end{proof}

Proposition \ref{prop:SeparableKrausForm} has several immediate consequences regarding units of separable \jcp products. 
In particular, with  $\FF=\RR,$ the case of a real separable product,
we obtain the following corollary.
 
\begin{corollary}
    If  $\Phi:\Mnr{n} \times \Mnr{n} \to \Mnr{m}$ is a real separable \jcp product, 
    then $\Phi(A,B)$ is symmetric for all $A,B$. As a consequence, if $n \geq 2$, then a real separable \jcp product cannot have a left or right unit.
\end{corollary}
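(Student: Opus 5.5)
The plan is to apply Proposition~\ref{prop:SeparableKrausForm} with $\FF=\RR$. If $\Phi$ is real separable, it has a Choi-Kraus decomposition with real rank one coefficients $V_j = v_j w_j^*$, where $v_j\in\RR^{n^2}$ and $w_j\in\RR^m$. Equation~\eqref{eq:SeparableKrausForm} then gives
\[
 \Phi(A,B)=\sum_j \left[v_j^*(A\otimes B)v_j\right] w_jw_j^*
\]
for all real $A,B$. Each scalar $v_j^*(A\otimes B)v_j$ is real because $v_j$, $A$ and $B$ are real. Each $w_jw_j^*=w_jw_j^\top$ is a real symmetric matrix. Hence $\Phi(A,B)$ is a real linear combination of symmetric matrices, and so it is symmetric. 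This proves the first assertion. The only point requiring care is that the decomposition in Proposition~\ref{prop:SeparableKrausForm} is genuinely over $\RR$; that holds because separability over $\RR$ was defined with $w_j\in\RR^{n^2}$ and $v_j\in\RR^m$.

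For the consequence, we need a unit-type map in the relevant setting. A left or right unit requires $m=n$, since the definition of units is for products $\Mnr{n}\times\Mnr{n}\to\Mnr{n}$. Suppose $\unitL$ is a left unit, so that $\Phi(\unitL,Y)=Y$ for all $Y\in\Mnr{n}$. Take $Y=E_{0,1}$, which is not symmetric when $n\ge 2$. Then $\Phi(\unitL,E_{0,1})=E_{0,1}$ would be non-symmetric, contradicting the first part. The right unit case is identical, using $\Phi(E_{0,1},\unitR)=E_{0,1}$.

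There is no real obstacle here. The only thing to check is that every real matrix, in particular $\unitL$ and $E_{0,1}$, is an admissible argument. This is clear, since the symmetry statement holds for all $A,B\in\Mnr{n}$ by bilinearity and the formula above, not only for psd inputs.
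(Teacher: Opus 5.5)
Your proposal is correct and follows essentially the same argument as the paper. You use the separable Choi--Kraus form \eqref{eq:SeparableKrausForm} over $\RR$ to write $\Phi(A,B)$ as a real linear combination of the symmetric matrices $w_jw_j^\top$, and then contradict $\Phi(\unitL,Y)=Y$ (or $\Phi(X,\unitR)=X$) with a non-symmetric choice such as $E_{0,1}$.
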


\begin{proof}
    The fact that $\Phi(A,B)$ is symmetric is immediate from equation \eqref{eq:SeparableKrausForm}, which shows that $\Phi(A,B)$ is an $\RR$-linear combination of symmetric matrices.  On the other hand, if $\unitL$ is a left unit of $\Phi$, then $\Phi(\unitL,B) = B$ is not symmetric if $B$ is not symmetric, contradicting the fact that $\Phi(A,B)$ is always symmetric
    (when $n\ge 2$). Right units are handled analogously. 
\end{proof}

\begin{corollary}
 \label{c:id-then-not-sep}
     Suppose $\Phi:\Mnc{n} \times \Mnc{n} \to \Mnc{n}$ is a separable \jcp product.
    
 \begin{enumerate}[(i)]\itemsep=6pt
  \item If  $n \geq 2,$  then $\Phi$ cannot have a positive semidefinite left or right unit;
  \item If the separable rank of $\Phi$ is less than $n^2$, then $\Phi$ cannot have either a left or right unit. 
  \end{enumerate}
\end{corollary}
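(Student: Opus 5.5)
By Proposition~\ref{prop:SeparableKrausForm}, we may write
\[
\Phi(A,B)=\sum_{j=1}^{\ell} \bigl[v_j^*(A\otimes B)v_j\bigr]\, w_jw_j^*,
\]
with $v_j\in\CC^{n^2}$ and $w_j\in\CC^{n}$, where $\ell$ can be taken equal to the separable rank. This is the single structural fact the argument will use. In particular, for every $A,B$, the matrix $\Phi(A,B)$ lies in $\spann\{w_jw_j^*\}_{j=1}^{\ell}$, a subspace of $\Mnc{n}$ of dimension at most $\ell$.

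Item (ii) follows from this directly. If $\unitL$ is a left unit, then $Y=\Phi(\unitL,Y)$ for every $Y\in\Mnc{n}$, so the range of $Y\mapsto\Phi(\unitL,Y)$ is all of $\Mnc{n}$. That range has dimension $n^2$, yet it sits inside $\spann\{w_jw_j^*\}$, which has dimension at most $\ell<n^2$. This is a contradiction. Right units are handled in exactly the same way, or by passing to $\Phi_\Pi(A,B)=\Phi(B,A)$, which is again separable: the Kraus vectors become $\Pi_n v_j$ and have rank one.

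For item (i), suppose $\unitL$ is a psd left unit. By Lemma~\ref{l:LU} we may assume $\unitL=\alpha\alpha^*$ has rank one. Then
\[
Y=\Phi(\alpha\alpha^*,Y)=\sum_j W_j^*YW_j, \qquad W_j=(\alpha^*\otimes I_n)V_j=(\alpha^*\otimes I_n)v_j\,w_j^*,
\]
and each $W_j$ has rank at most one. This expresses the identity map on $\Mnc{n}$ by a Choi-Kraus representation, so Lemma~\ref{l:pre:LU} forces each $W_j$ to be a scalar multiple of $I_n$. For $n\ge2$, a matrix of rank at most one can be a multiple of $I_n$ only if it is $0$. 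Hence every $W_j=0$, which gives $Y=0$ for all $Y$, a contradiction.

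The right-unit case is the same computation applied to $\Phi_\Pi$, whose Kraus coefficients $\Pi_n v_jw_j^*$ still have rank one; for this one needs the right-unit analogue of Lemma~\ref{l:LU}, obtained by the same symmetry. The only step needing care is the reduction to a rank one psd unit together with the observation that contracting a rank one $V_j$ against $\alpha^*\otimes I_n$ preserves rank at most one. Lemma~\ref{l:LU} and Lemma~\ref{l:pre:LU} supply exactly what is needed, so I expect no real obstacle. Alternatively, one could avoid the reduction: for psd $\unitL=\sum_k u_ku_k^*$, the identity map becomes $\sum_{j,k}W_{jk}^*YW_{jk}$ with $W_{jk}=(u_k^*\otimes I_n)v_jw_j^*$, each of rank at most one, and the same conclusion follows.
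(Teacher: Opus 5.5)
Your argument is correct. Item (ii) is the paper's own argument: the range of $\Phi$ sits in $\spann\{w_jw_j^*\}$, which has dimension less than $n^2$, whereas a one-sided unit makes the range all of $\Mnc{n}$.

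For item (i) you take a different route from the paper.

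\begin{itemize}
\item The paper argues convex-geometrically. If $\unitL\succeq 0$ is a left unit, the coefficients $v_j^*(\unitL\otimes B)v_j$ are nonnegative for psd $B$. So $B=\Phi(\unitL,B)$ would exhibit the whole psd cone of $\Mnc{n}$ as the conic hull of the finitely many $w_jw_j^*$. This fails for $n\ge 2$, since that cone has infinitely many extreme rays $\{t\,xx^*\}$.
\item You argue operator-theoretically. The map $Y\mapsto\Phi(\unitL,Y)$ is the identity map written with Kraus operators $(u_k^*\otimes I_n)v_jw_j^*$ of rank at most one. Lemma~\ref{l:pre:LU} forces these to be scalar multiples of $I_n$, hence zero when $n\ge 2$. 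In other words, the identity map is not entanglement breaking.
\end{itemize}

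The paper's version is more elementary: it uses only the nonnegativity of the functionals $v_j^*(\cdot)v_j$ on $\unitL\otimes B$ and the fact that the psd cone is not polyhedral. Yours stays inside the Choi--Kraus framework and reuses the unit lemmas already proved in Section~\ref{sec:BasicAlgebra}.

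Of your two variants, the direct one with $\unitL=\sum_k u_ku_k^*$ is preferable, since it removes the need for Lemma~\ref{l:LU}. For right units you also do not need a separate ``right-unit analogue'' of Lemma~\ref{l:LU}. A right unit $\unitR$ of $\Phi$ is a psd left unit of the jcp product $\Phi_\Pi$, and the lemma applies to $\Phi_\Pi$ as stated.
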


\begin{proof}
    Suppose $\Phi:\Mnc{n} \times \Mnc{n} \to \Mnc{n}$ is separable. Using Proposition \ref{prop:SeparableKrausForm}, there exist collections of vectors $\{v_j\}_{j=1}^k \subset \mathbb{C}^n$ and $\{w_j\}_{j=1}^k \subset \mathbb{C}^{n^2}$ such that
 \begin{equation}
        \label{e:spann}
    \Phi(A,B) =  \sum_{j=1}^k \left [ v_j^*(A\otimes B)v_j  \right ]  w_jw_j^* \in \spann(\{w_j w_j^* \mid 1\le j\le k\}).
 \end{equation}
     Now, if $\Phi$ has a positive semidefinite left unit $\unitL \succeq 0$, then $\left( v_j^* (\unitL \otimes B) v_j\right) \geq 0$ for all positive semidefinite matrices $B$,  leading to the contradiction that  each psd matrix $B\in \Mnc{n}$
      is a finite
      conic linear combination of the finite set of psd matrices $\{ w_j w_j^*\}.$
     Hence no such $\unitL$ exists if $n\ge 2.$

     To prove the second statement, note that equation~\eqref{e:spann} implies
     the dimension of the range of $\Phi$ is at most $k.$ On the other hand,
     if $\Phi$ has an unit, the the dimension of the range of $\Phi$ is $n^2.$
\end{proof}

\begin{corollary}
    The Schur product and \JP are not separable for $n \geq 2$. 
\end{corollary}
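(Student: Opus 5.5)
The plan is to deduce the corollary from Corollary~\ref{c:id-then-not-sep}: it suffices to exhibit, for each of $S$ and $\CP$, a unit. We will pick a positive semidefinite unit where possible, since then part (i) applies directly for $n\ge 2$. If a psd unit is not available, a unit of any kind combined with a bound on separable rank would be needed, so we aim for psd units.

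For the Schur product we take the all-ones matrix $J=\mathbf{1}\mathbf{1}^*$, where $\mathbf{1}=\sum_j e_j$. Since $S(J,B)_{ij}=B_{ij}$ and $S(A,J)_{ij}=A_{ij}$, $J$ is a two-sided unit, and it is rank one psd. Corollary~\ref{c:id-then-not-sep}(i) then shows $S$ is not separable when $n\ge 2$.

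For the convolution product we take $E_{0,0}=e_0e_0^*$. From the formula, $\CP(E_{0,0},B)_{ij}=\sum_{m\le i,\,n\le j}\delta_{m0}\delta_{n0}B_{i-m,j-n}=B_{ij}$, and symmetrically $\CP(A,E_{0,0})=A$. As a check against the Choi-Kraus form, $V^*(e_0e_0^*\otimes B)V=W^*BW$ with $W=(e_0^*\otimes I)V=\fw^{*0}=I$ by \eqref{e:VforGat}. So $E_{0,0}$ is a psd unit, and Corollary~\ref{c:id-then-not-sep}(i) again gives non-separability for $n\ge2$.

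The only real obstacle is confirming that the unit computations are exact and that the hypothesis of Corollary~\ref{c:id-then-not-sep}(i) is precisely a psd left or right unit, which both examples satisfy. As a cross-check we can also appeal to part (ii): $S$ and $\CP$ have Choi-Kraus rank one, so $C_S$ and $C_\CP$ have rank one. A rank-one separable Choi matrix would force the single Kraus coefficient $V$ to be rank one, hence $\Phi(A,B)=[v^*(A\otimes B)v]\,ww^*$ would have one-dimensional range. This contradicts the existence of a unit once $n\ge2$.
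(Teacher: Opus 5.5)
Your proposal is correct and matches the paper's proof: exhibit the rank-one psd units $\mathbf{1}\mathbf{1}^*$ for $S$ and $E_{0,0}$ for $\CP$, then invoke Corollary~\ref{c:id-then-not-sep}(i). Your $E_{0,0}$ is the correct index under the paper's $0$-based convention (the paper's $e_1e_1^*$ is an indexing slip), and your rank-one cross-check via part (ii) is also valid.
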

\begin{proof}
Both the Schur and \JP have rank one psd units:
letting $e \in \mathbb{C}^n$ denote the vector of all ones, $ee^*$ is the unit of the Schur product. Similarly, $e_1 e_1^*$ is the is the unit of the \JP. Thus neither product is 
separable as a consequence of Corollary \ref{c:id-then-not-sep}. 
\end{proof}

\begin{remark}
    There exists a complex separable \jcp product $\Phi:\Mnc{2} \times \Mnc{2} \to \Mnc{2}$ that has separable rank $4$ and has a unit. Furthermore, $\Phi$ can be chosen so that $\Phi$ maps real matrices to real matrices. This example was generated with the aid of ChatGPT and was verified in Mathematica. A public Mathematica notebook that verifies the example is available at \href{https://github.com/NCAlgebra/UserNCNotebooks/tree/master/EvertJuryMcCullough}{https://github.com/NCAlgebra/UserNCNotebooks}.
 \qed
\end{remark}

\section{Numerical Experiments}
\label{sec:NumExp}

We now present numerical experiments designed to illustrate the quality of the lower bounds for the determinant of the convolution product obtained in Section \ref{sec:LowerBounds}. The numerical experiments in this section were run in Mathematica. A public Mathematica notebook that can be used to reproduce the experiments is available at the location \href{https://github.com/NCAlgebra/UserNCNotebooks/tree/master/EvertJuryMcCullough}{https://github.com/NCAlgebra/UserNCNotebooks}.

As our lower bounds are valid only for positive semidefinite inputs, we sample from the Wishart distribution $W_n(I_n/k,k)$ on $n \times n$ positive semidefinite matrices. Here $k$ is the number of degrees of freedom. Note that the choice of scale matrix $I_n/k$ gives the expected value $\mathbb{E}(X) = I_n$  for $X \sim W_n(I_n/k,k)$. As $k$ increases, the conditioning of the matrices drawn from $W_n(I_n/k,k)$ improves on average. Intuitively, when $k$ is large, a random matrix from $W_n(I_n/k,k)$ is closer to the identity matrix. 

Our experiment is performed as follows. For each fixed $n=2,\dots,20$ and $k= n,2n,4n,8n$, we randomly generate $1000$ pairs of matrices $P,Q$ sampled independently from the Wishart distribution $W_n(I_n/k,k)$. For each pair $P,Q$, we compute the determinant of the convolution product $\det(\CP(P,Q))$ as well as the lower bounds obtained from Proposition \ref{p:generic-lower}, Theorem \ref{t:lbJ}, and \cite[Theorem~2.1]{lower}. For each quantity, we record the median value over the 1000 random matrix pairs $(P,Q)$. We use the notation $L_{\mathrm{\jcp,gen}}$, $L_{\mathrm{\jcp,conv}}$, and $L_{\mathrm{GMV}}$ to denote the medians of the lower bound obtained from Proposition \ref{p:generic-lower}, Theorem \ref{t:lbJ}, and \cite[Theorem~2.1]{lower}, respectively. The results are presented in Figure \ref{fig:NumExp}.

\begin{figure}[htbp]
    \centering
    \resizebox{1 \textwidth}{!}{\input{NumerExpPlotsTikz.tex}}
    \caption{Medians of determinant lower bounds for $n \times n$ Wisharts.}
    \label{fig:NumExp}
\end{figure}

 As expected from Remark \ref{r:lbJ}, we see for all choices of $k$ that $\Lcpc \geq \Lgmv$. We observe that the gap between these medians increases as $k$ increases. This is perhaps unsurprising, as the typical distance between $P$ and $Q$ decreases as $k$ increases, and one can compute that if $P=Q$, then the bound obtained from Theorem~\ref{t:lbJ} simplifies to $2^{n-1} p_{00}^n \det(P)$. On the other hand, for $P=Q$, the bound from \cite[Theorem~2.1]{lower} simplifies to $2p_{00}^n \det(P)$.

For $k=n$, we observe that $\Lcpg$ is significantly smaller than both $\Lcpc$ and $\Lgmv$. However, the relative strength of $\Lcpg$ increases strongly with $k$. When $k=4n$, we observe that $\Lcpg$ exceeds $\Lgmv$ for larger values of $n$. Additionally, the plot suggests that $\Lcpg$ may exceed $\Lcpc$ for sufficiently large values of $n$. When $k=8n$, we see that $\Lcpg$ is larger than both $\Lcpc$ and $\Lgmv$ for all $n \geq 14$. This is also perhaps expected. As mentioned above, $P$ and $Q$ are closer to the identity matrix (in expectation) as $k$ increases. One may verify that if $P=Q=I_n$, then the bound obtained from Proposition~\ref{p:generic-lower} is $n!$, which is equal to the true determinant $\det(\CP(I_n,I_n))$. On the other hand, for $P=Q=I_n$ the bounds obtained from Theorem~\ref{t:lbJ} and \cite[Theorem~2.1]{lower} are $2^{n-1}$ and $2$, respectively.


\begin{thebibliography}{99}

\bibitem[AM]{AM}
  J. Agler and J. E. McCarthy, {\it Pick Interpolation and Hilbert Function Spaces,}
  Graduate Studies in Mathematics, 44, Amer. Math. Soc., Providence, RI, 2002.

  \bibitem[Alf]{Alf}
  E. M. Alfsen, {\it Compact Convex Sets and Boundary Integrals,}
  2nd ed., Ergebnisse der Mathematik und ihrer Grenzgebiete, 57,
  Springer-Verlag, New York, 1971.

  

\bibitem[Arv]{Arv}
  W. Arveson, {\it An Invitation to $C^*$-Algebras,}
  Graduate Texts in Mathematics, 39, Springer-Verlag, New York, 1976.

\bibitem[BP]{BP}
  A. Berman and R. J. Plemmons, {\it Nonnegative Matrices in the Mathematical Sciences,}
  Classics in Applied Mathematics, 9, SIAM, Philadelphia, PA, 1994.

 \bibitem[Choi]{Choi}
  M.-D. Choi, {\it Completely Positive Linear Maps on Complex Matrices,} Linear Algebra Appl., 10 (1975), pp. 285--290.

  \bibitem[Dav]{Dav}
  K. R. Davidson, {\it $C^*$-Algebras by Example,}
  Fields Institute Monographs, 6, Amer. Math. Soc., Providence, RI, 1996.

  \bibitem[DMM]{DMM}
  M. A. Dritschel, S. Marcantognini, and S. McCullough,
  {\it Interpolation in Semigroupoid Algebras,}
  J. Reine Angew. Math., 606 (2007), pp. 1--40.


\bibitem[FJ07]{FallatJohnson} S.M. Fallat and C.R. Johnson, {\it Hadamard Duals, Retractability and Oppenheim's Inequality}, Oper. Matrices, 1 (2007). DOI: \url{dx.doi.org/10.7153/oam-01-21}.  

  
 \bibitem[GMV+]{lower}
 Dominique Guillot, Javad Mashreghi and Prateek Kumar Vishwakarma, 
   {\it Oppenheim--Schur inequalities for causal products},
   arXiv:2602.21056v1
   
 \bibitem[GMV++]{dominiqueOppenheim}
 Dominique Guillot, Javad Mashreghi and Prateek Kumar Vishwakarma, 
  {\it Sharp lower bounds for generalized operator products}, arXiv:2601.00409v1

  \bibitem[Horn]{Horn}
  R. A. Horn, {\it The Hadamard Product,} in {\it Matrix Theory and Applications},
  C. R. Johnson, ed., Proc. Sympos. Appl. Math., 40 (1990), pp. 87--169.

\bibitem[HSR]{HSR}
  M. Horodecki, P. W. Shor, and M. B. Ruskai,
  {\it Entanglement Breaking Channels,}
  Rev. Math. Phys., 15 (2003), pp. 629--641.

  \bibitem[Jury]{Jury} M. T. Jury, {\it Matrix Products and Interpolation Problems in Hilbert Function Spaces,} Ph.D. thesis, Washington University in St. Louis, ProQuest LLC, Ann Arbor, MI, 2002.

\bibitem[KPTT]{KPTT} A. Kavruk, V. I. Paulsen, I. G. Todorov, and M. Tomforde, {\it Tensor products of operator systems}, J. Funct. Anal., 261 (2011), pp. 267--299.

  \bibitem[LS]{LS}
  D. D. Lee and H. S. Seung, {\it Learning the Parts of Objects by Non-negative Matrix Factorization,}
  Nature, 401 (1999), pp. 788--791.

  \bibitem[Opp]{Oppenheim} A. Oppenheim, {\it Inequalities connected with definite hermitian forms,} J. London Math. Soc., 5(2) (1930), pp. 114--119.

\bibitem[Pau]{PaulsenBook}
  V. I. Paulsen, \textit{Completely Bounded Maps and Operator Algebras}, Cambridge Studies in Advanced Mathematics, vol.\ 78, Cambridge University Press, Cambridge, 2002. ISBN 0-521-80888-6. DOI: \url{https://doi.org/10.1017/CBO9780511546462}. 

\bibitem[Sch]{Schur}
    Issai Schur, \textit{Bemerkungen zur Theorie der beschr\"ankten Bilinearformen mit unendlich vielen Ver\"anderlichen,} J. Reine Angew. Math., 140 (1911), pp. 1--28.

    \bibitem[Wagner]{Wagner} D. G. Wagner, {\it Total Positivity of Hadamard Products,} J. Math. Anal. Appl., 163 (1992), pp. 459--483.

    \bibitem[Watrous]{Watrous}
  J. Watrous, {\it The Theory of Quantum Information,}
  Cambridge University Press, Cambridge, 2018.
  
\end{thebibliography}
\end{document}